\theoremstyle{plain}
\newtheorem{thm}{Theorem}[section]
\newtheorem{lem}[thm]{Lemma}
\newtheorem{dfn}[thm]{Definition}
\newtheorem{prop}[thm]{Proposition}
\newtheorem{rmk}[thm]{Remark}
\def\C{\mathscr{C}}
\def\D{\mathrm{D}}
\def\G{\mathscr{G}}
\def\T{\mathrm{T}}
\def\d{\mathrm{d}}
\def\Cset{\mathbb{C}}
\def\Eset{\mathbb{E}}
\def\Fset{\mathbb{F}}
\def\Gset{\mathbb{G}}
\def\Kset{\mathbb{K}}
\def\Lset{\mathbb{L}}
\def\Nset{\mathbb{N}}
\def\Qset{\mathbb{Q}}
\def\Rset{\mathbb{R}}
\def\Zset{\mathbb{Z}}
\def\GL{\mathrm{GL}}
\def\epsilon{\varepsilon}
\def\theequation{\arabic{section}.\arabic{equation}}
\begin{document}

% **********************************************************
% Title of This Paper
% **********************************************************

\title[Nonintegrability of the SEIR epidemic model]%
{Nonintegrability of the SEIR epidemic model}
%\thanks{This work was partially supported by the JSPS KAKENHI Grant Number JP17H02859.}

\author{Kazuyuki Yagasaki}

\address{Department of Applied Mathematics and Physics, Graduate School of Informatics,
Kyoto University, Yoshida-Honmachi, Sakyo-ku, Kyoto 606-8501, JAPAN}
\email{yagasaki@amp.i.kyoto-u.ac.jp}

\date{\today}
\subjclass[2020]{92D30; 37J30; 34M15; 12H05}
\keywords{SEIR epidemic model; nonintegrability; Morales-Ramis theory;
 differential Galois theory; incomplete gamma function}

\begin{abstract}
We prove the nonintegrability
 of the susceptible-exposed-infected-removed (SEIR) epidemic model
 in the Bogoyavlenskij sense.
This property of the SEIR model is different
 from the more fundamental susceptible-infected-removed (SIR) model,
 which is Bogoyavlenskij-nonintegrable.
Our basic tool for the proof is an extension of the Morales-Ramis theory
 due to Ayoul and Zung.
Moreover, we introduce three new state variables
 and extend the system to a six-dimensional system
 to treat transcendental first integrals and commutative vector fields.
We also use the fact that the incomplete gamma function $\Gamma(\alpha,x)$ is not elementary
 for $\alpha\not\in\Nset$, of which a proof is included.
\end{abstract}
\maketitle

% **********************************************************
% Section 1
% **********************************************************

\section{Introduction}

Mathematical epidemic models have attracted a great deal of attention
 in diverse scientific disciplines including epidemiology, medicine, pharmacy and economics
 as well as fundamental ones like mathematics, physics and biology
 \cite{BC19,M15,ST11}.
In particular, the recent intense interest in the models mainly
 results from the pandemic spread of the coronavirus COVID-19
 beginning at the end of 2019 in China.
The \emph{susceptible-infected-removed} (SIR) model 
\begin{align}
\dot{S}=-rSI,\quad
\dot{I}=rSI-a I,\quad
\dot{R}=aI
\label{eqn:sir}
\end{align}
and the \emph{susceptible-exposed-infected-removed} (SEIR) model
\begin{align}
\dot{S}=-rSI,\quad
\dot{E}=rSI-b E,\quad
\dot{I}=b E-a I,\quad
\dot{R}=aI
\label{eqn:seir}
\end{align}
are ones of the most famous mathematical epidemic models \cite{BC19,M15,ST11}.
Here the state variables $S$, $E$, $I$ and $R$ represent
 the numbers of susceptible, exposed, infected and removed individuals, respectively,
 while $r$, $b$ and $a$ represent the infection, latent and removal rates, respectively. 
The former was originally introduced by Kermack and McKendrick \cite{KM27} in 1927
 sometime after the Spanish flu pandemic of 1918-1919.
The latent period $1/b$ is taken into consideration in the latter, unlike the former.

Since the variable $R$ does not appear in the first two equations of \eqref{eqn:sir}
 and in the first three equations of \eqref{eqn:seir},
 the dynamics of \eqref{eqn:sir} and \eqref{eqn:seir}
 are essentially described by the two-dimensional system 
\begin{align}
\dot{S}=-rSI,\quad
\dot{I}=rSI-a I
\label{eqn:si}
\end{align}
and by the three-dimensional system
\begin{align}
\dot{S}=-rSI,\quad
\dot{E}=rSI-b E,\quad
\dot{I}=b E-a I,
\label{eqn:sei}
\end{align}
respectively.
We also refer to \eqref{eqn:si} and \eqref{eqn:sei}, respectively, as the SIR and SEIR models.
We see that
\[
F_1(S,I,R)=S+I+R,\quad
F_2(S,I)=S\exp\left(-\frac{r}{a}(S+I)\right)
\]
are first integrals, i.e., conservative quantities, of \eqref{eqn:sir}.
So solutions to \eqref{eqn:sir} (resp. to \eqref{eqn:si}) continue to remain
 in a one-dimensional level set of $F_1(S,I,R),F_2(S,I)=$\linebreak const.
 (resp. $F_2(S,I)=$const.)
 in the three-dimensional  (resp. two-dimensional) phase space.
Thus, we can obtain all solutions to \eqref{eqn:sir} and to \eqref{eqn:si}.
See, e.g., Section~2.4 of \cite{BC19}, Section~2.1 of \cite{M15} or Appendix~A of \cite{Y22} for more details.
The object of the present paper
 is to prove that this is not the case in \eqref{eqn:seir} and \eqref{eqn:sei}. 
To make our statement precise,
 we use the general concept of integrability due to Bogoyavlenskij \cite{B98}

Consider general $n$-dimensional systems of the form
\begin{equation}
\dot{x}=f(x),\quad\mbox{$x\in\Rset^n$ or $\Cset^n$},
\label{eqn:gsys}
\end{equation}
where $f:\Rset^n\to\Rset^n$ or $\Cset^n\to\Cset^n$ is meromorphic or analytic.

\begin{dfn}[Bogoyavlenskij]
\label{dfn:1a}
The $n$-dimensional system \eqref{eqn:gsys}
 is called \emph{$(\ell,n-\ell)$-integrable} or simply \emph{integrable} 
 if there exist $\ell$ vector fields $f_1(x)(:= f(x)),f_2(x),\linebreak\dots,f_\ell(x)$
 and $n-\ell$ scalar-valued functions $F_1(x),\dots,F_{n-\ell}(x)$ such that
 the following two conditions hold:
\begin{enumerate}
\setlength{\leftskip}{-1.8em}
\item[\rm(i)]
$f_1(x),\dots,f_\ell(x)$ are linearly independent almost everywhere
 and commute with each other,
 i.e., $[f_j,f_k](x):=\D f_k(x)f_j(x)-\D f_j(x)f_k(x)\equiv 0$ for $j,k=1,\ldots,\ell$,
 where $[\cdot,\cdot]$ denotes the Lie bracket$;$
\item[\rm(ii)]
The derivatives $\D F_1(x),\dots, \D F_{n-\ell}(x)$ are linearly independent almost everywhere
 and $F_1(x),\dots,F_{n-\ell}(x)$ are first integrals of $f_1, \dots,f_\ell$,
 i.e., $\D F_k(x)\cdot f_j(x)\equiv 0$ for $j=1,\ldots,\ell$ and $k=1,\ldots,n-\ell$,
 where ``$\cdot$'' represents the inner product.
\end{enumerate}
We say that the system is \emph{rationally} $($resp. \emph{meromorphically}$)$ \emph{integrable}
 if the first integrals and commutative vector fields are rational $($resp. meromorphic$)$. 
\end{dfn}

Definition~\ref{dfn:1a} is considered as a generalization of
 Liouville-integrability for Hamiltonian systems \cite{A89,M99}
 since an $n$-degree-of-freedom Liouville-integrable Hamiltonian system with $n\ge 1$
 has not only $n$ functionally independent first integrals
 but also $n$ linearly independent commutative (Hamiltonian) vector fields
 generated by the first integrals.
The systems \eqref{eqn:sir} and \eqref{eqn:si} are, respectively,
 $(1,2)$- and $(1,1)$-integrable.

We also see that
\begin{equation}
F_1(S,E,I,R)=S+E+I+R,\quad
F_2(S,E,I)=S\exp\left(-\frac{r}{a}(S+E+I)\right)
\label{eqn:F12}
\end{equation}
are first integrals of \eqref{eqn:seir},
 and $F_2(S,E,I)$ is a first integral of \eqref{eqn:sei}.
Obviously, if the system \eqref{eqn:sei} is integrable,
 then so is the system \eqref{eqn:seir}, and vice versa.
Our main result is stated as follows.

\begin{thm}
\label{thm:main}
If $a\neq 0$ and $b/a\not\in\Qset\setminus\{1\}$,
 then the SEIR epidemic model \eqref{eqn:sei}
 is not Bogoyavlenskij-integrable near $S=0$
 such that the first integrals and commutative vector fields
 are rational functions of $S$, $E$, $I$, $e^S$, $e^E$ and $e^I$.
Here we may take $a,b,r\in\Cset$.
\end{thm}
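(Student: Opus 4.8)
The plan is to argue by contradiction, combining the Ayoul--Zung extension of the Morales--Ramis theory with an explicit variational equation whose differential Galois group is controlled by the incomplete gamma function. So suppose \eqref{eqn:sei} is Bogoyavlenskij-integrable with first integrals and commutative vector fields that are rational in $S,E,I,e^S,e^E,e^I$.

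First I would pass to a six-dimensional extension. Setting $u=e^S$, $v=e^E$, $w=e^I$ yields the polynomial system on $\Cset^6$,
\begin{align}
&\dot S=-rSI,\quad \dot E=rSI-bE,\quad \dot I=bE-aI,\nonumber\\
&\dot u=-rSIu,\quad \dot v=(rSI-bE)v,\quad \dot w=(bE-aI)w.\label{eqn:ext}
\end{align}
Under the assumed integrability the first integrals and commutative vector fields become \emph{rational} functions of $(S,E,I,u,v,w)$, and together with the scaling fields $u\partial_u,v\partial_v,w\partial_w$, which commute with \eqref{eqn:ext} and with each other, they should exhibit \eqref{eqn:ext} as a meromorphically Bogoyavlenskij-integrable system. (Verifying the exact dimension count and the mutual compatibility of these data is a technical point, to be settled by a separate lemma.) Since \eqref{eqn:ext} is rational, the Ayoul--Zung theorem then applies: along any integral curve, the identity component $G^0$ of the differential Galois group of the variational equation must be abelian.

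Next I would produce a suitable integral curve near $S=0$. The line $S=E=0$ is invariant and carries the solution $(S,E,I)=(0,0,I_0e^{-at})$, whose lift to \eqref{eqn:ext} is $(0,0,I_0e^{-at},1,1,e^{I_0e^{-at}})$. Using $x=e^{-at}$ as coordinate on this curve, the variational equation has coefficients rational in $x$ together with $e^{I_0 x}$ (the latter coming from the new variable $w=e^I$). Its $(\xi_S,\xi_E)$-block decouples and, after dividing by $-ax$, reads
\begin{equation}
\frac{d}{dx}\begin{pmatrix}\xi_S\\ \xi_E\end{pmatrix}
=\begin{pmatrix} rI_0/a & 0\\[2pt] -rI_0/a & b/(ax)\end{pmatrix}
\begin{pmatrix}\xi_S\\ \xi_E\end{pmatrix},
\label{eqn:nve}
\end{equation}
so $\xi_S=e^{(rI_0/a)x}$, while $\xi_E$ is obtained from the single quadrature $\int x^{-b/a}e^{(rI_0/a)x}\,\d x$ (its homogeneous solution being $x^{b/a}$). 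The substitution $s=-(rI_0/a)x$ turns this into $\int s^{-b/a}e^{-s}\,\d s=-\Gamma(1-b/a,s)+\mathrm{const}$, so the incomplete gamma function $\Gamma(\alpha,s)$ with $\alpha=1-b/a$ enters the Picard--Vessiot extension.

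The heart of the argument, and the step I expect to be the main obstacle, is to show that $G^0$ is \emph{not} abelian, contradicting Ayoul--Zung. The solutions above carry a torus part (from $e^{(rI_0/a)x}$ and from $x^{b/a}$) and a unipotent part from the quadrature; writing the group in lower-triangular form $\begin{pmatrix}p&0\\ q&m\end{pmatrix}$, two such elements have $(2,1)$-entry $q_1p_2+m_1q_2$ in one order and $q_2p_1+m_2q_1$ in the other, so they fail to commute precisely when the torus acts nontrivially on the unipotent direction. The unipotent part $q$ is nonzero exactly when $\Gamma(1-b/a,s)$ lies genuinely outside the field generated by $x$, $x^{b/a}$, $e^{(rI_0/a)x}$ and $e^{I_0x}$ — that is, when $\Gamma(\alpha,x)$ is not elementary, which holds for $\alpha=1-b/a\not\in\Nset$. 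The delicate points are that the correct base field already contains $e^{I_0x}$, which can absorb the $\xi_S$-torus when $r/a\in\Qset$, so that one must secure a surviving noncommuting torus from $x^{b/a}$ (needing $b/a\not\in\Qset$) or, in the borderline case $b/a=1$, from the remaining coordinates $\xi_u,\xi_v,\xi_w$ of the full six-dimensional variational equation. Managing these cases is exactly where the hypothesis $b/a\not\in\Qset\setminus\{1\}$ and the non-elementarity of $\Gamma(\alpha,x)$ do their work; the resulting non-abelian $G^0$ contradicts the integrability assumption and proves the theorem.
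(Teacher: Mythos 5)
Your plan follows essentially the same route as the paper: lift the system to six dimensions by adjoining exponential variables so that the hypothesis becomes rational integrability of a polynomial vector field, apply the Ayoul--Zung theorem along the orbit $(0,0,I_0e^{-at})$ on the invariant plane $S=E=0$, and extract a non-abelian identity component of the Galois group from the quadrature $\int x^{-b/a}e^{cx}\,\d x$ appearing in $\delta E$, whose transcendence over the base field is exactly the non-elementarity of $\Gamma(1-b/a,\cdot)$; the paper even proves that non-elementarity statement itself via the Risch algorithm, as you would need to. Two specifics of your sketch differ from what actually closes the argument. First, the paper's auxiliary variables are $X=e^{(r/a)(S+E+I)}$, $Y=e^{(r/a)(S+E)}$, $Z=e^{(r/a)S}$ rather than $e^S,e^E,e^I$; with this normalization $\delta S$ is a constant multiple of $\bar X(t)=\exp(\tfrac{rC_2}{a}e^{-at})$, which is a generator of the base field $\Cset(e^{-at},\bar X(t))$, so no torus from $\xi_S$ ever arises and the case distinction on $r/a$ that worries you disappears. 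Second, in the borderline case $b/a=1$ the non-commutativity does not come from the extra coordinates: there the Galois group contains two unipotent generators, one translating $t\mapsto t+c$ inside $te^{-at}$ (which moves $\delta I$), the other shifting the non-elementary quadrature in $\delta E$, and these already fail to commute within the $(\delta S,\delta E,\delta I)$ block --- a Heisenberg-type configuration rather than a torus acting on a unipotent part. With those two adjustments, the "main obstacle" you identify is resolved exactly as you predict, by exhibiting one group element with nontrivial unipotent entry (via Proposition~\ref{prop:G1}(i) and the incomplete gamma function) and one with $\alpha_1=0$ but nontrivial action on $e^{-bt}$ (resp. on $te^{-at}$), and checking that the subgroups they generate in $\G^0$ do not commute.
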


Obviously, the system \eqref{eqn:sei} is Bogoyavlenskij-integrable
 for $a=0$ and/or $b=0$ 
 since $\tilde{F}_1(S,E,I)=S+E+I$ is another first integral for $a=0$
 and the variable $E$ does not appear in its right hand side for $b=0$.
When $a+b=0$, the system \eqref{eqn:sei} is $(1,2)$-integrable,
 since
\[
F_3(S,E,I)=-\frac{a}{r}S+\tfrac{1}{2}aI^2-(a+1)(S+E+I)+\tfrac{1}{2}(S+E+I)^2
\]
is another first integral.
We still conjecture that the SEIR epidemic model \eqref{eqn:sei}
 is Bogoyavlenskij-nonintegrale for $a\neq 0$ and $b/a\in\Qset\setminus\{0,\pm 1\}$,
 but it seems to be beyond the approach used here to prove the conjecture.

Our basic tool to prove Theorem~\ref{thm:main}
 is an extension of the Morales-Ramis theory \cite{M99,MR01}
 due to Ayoul and Zung \cite{AZ10},
 which is based on the differential Galois theory
 for linear differential equations \cite{CH11,PS03}.
The extended theory says that the system ~\eqref{eqn:gsys} is Bogoyavlenskij-nonintegrable
 near a particular nonconstant solution
 if the \emph{identity component} of the \emph{differential Galois group}
 for the \emph{variational equation $(\!$VE$\,)$}, i.e.,
 the linearized equation, of \eqref{eqn:gsys} around the solution is not commutative.
See Sections~2.1 and 2.2 for more details.
So we first obtain nonconstant particular solutions to the SEIR model \eqref{eqn:sei}.
Moreover, we introduce three new state variables (see Eq.~\eqref{eqn:nsv} below)
 and extend the three-dimensional system \eqref{eqn:sei} to a six-dimensional system
 to treat such transcendental first integrals and commutative vector fields as $F_2(S,E,I)$.
We compute the differential Galois group for the VE
 of the extended system around the particular solutions.
For the computation of the differential Galois group,
 we use the fact that the (upper) incomplete gamma function
\[
\Gamma(\alpha,x)=\int_x^\infty y^{\alpha-1} e^{-y}\d y
\]
is not elementary for $\alpha\not\in\Nset\subset\Qset\setminus\{0\}$.
Since it does not seem to be found in references,
 we include a proof of this fact in Section~2.3
 along with the Risch algorithm \cite{B05,GCL92,R69},
 which gives a fundamental tool for determining
 whether integrals of elementary functions are also elementary or not.

The outline of this paper is as follows:
In Section~2 we provide necessary information
 on the differential Galois theory and general Morales-Ramis theory,
 and give a proof for the necessary fact on the incomplete gamma function.
We obtain particular solutions to \eqref{eqn:sei},
 introduce the new state variable and extended system,
 and shortly discuss its VE along the solutions in Section~3.
We give a proof of Theorem~\ref{thm:main} for $a\neq b$ and $a=b$ in Sections~4 and 5, 
 respectively.

% **********************************************************
% Section 2
% **********************************************************

\section{Preliminaries}
In this section we provide necessary preliminaries
 for the differential Galois theory, the general Morales theory and incomplete gamma function.

\subsection{Differential Galois theory}

We begin with the differential Galois theory for linear differential equations.
See the textbooks \cite{CH11,PS03} for more details on the theory.

Consider a linear system of differential equations
\begin{equation}
\dot{x}=Ax,\quad A\in\mathrm{gl}(n,\Kset),
\label{eqn:lsys}
\end{equation}
where $\Kset$ is a differential field and
 $\mathrm{gl}(n,\Kset)$ denotes the ring of $n\times n$ matrices
 with entries in $\Kset$.
Here a \emph{differential field} is a field
 endowed with a derivation $\partial$,
 which is an additive endomorphism satisfying the Leibniz rule
 and represented by the overdot in \eqref{eqn:lsys}.
The set $\mathrm{C}_{\Kset}$ of elements of $\Kset$ for which $\partial$ vanishes
 is a subfield of $\Kset$
 and called the \emph{field of constants of $\Kset$}.
In our application of the theory in this paper,
 the field of constants is $\Cset$, which is algebraically closed.

A \emph{differential field extension} $\Lset\supset \Kset$
 is a field extension such that $\Lset$ is also a differential field
 and the derivations on $\Lset$ and $\Kset$ coincide on $\Kset$.
A differential field extension $\Lset\supset \Kset$
 satisfying the following two conditions is called a \emph{Picard-Vessiot extension}
 for \eqref{eqn:lsys}:
\begin{enumerate}
%\item[\bf (PV1)]
%There exists a fundamental matrix $\Xi(x)$ of \eqref{LinearSystem} with entries in $\Lset$;
\item[\bf (PV1)]
The field $\Lset$ is generated by $\Kset$
 and elements of a fundamental matrix of \eqref{eqn:lsys};
\item[\bf (PV2)]
The fields of constants for $\Lset$ and $\Kset$ coincide.
\end{enumerate}
The system \eqref{eqn:lsys}
 admits a Picard-Vessiot extension which is unique up to isomorphism.

We now fix a Picard-Vessiot extension $\Lset\supset \Kset$
 and fundamental matrix $\Phi$ with entries in $\Lset$
 for \eqref{eqn:lsys}.
Let $\sigma$ be a \emph{$\Kset$-automorphism} of $\Lset$,
 which is a field automorphism of $\Lset$
 that commutes with the derivation of $\Lset$
 and leaves $\Kset$ pointwise fixed.
Obviously, $\sigma(\Phi)$ is also a fundamental matrix of \eqref{eqn:lsys}
 and consequently there is a matrix $M_\sigma$ with constant entries
 such that $\sigma(\Phi)=\Phi M_\sigma$.
This relation gives a faithful representation
 of the group of $\Kset$-automorphisms of $\Lset$
 on the general linear group as
\[
R\colon \mathrm{Aut}_{\Kset}(\Lset)\to\GL(n,\mathrm{C}_{\Lset}),
\quad \sigma\mapsto M_{\sigma},
\]
where $\GL(n,\mathrm{C}_{\Lset})$
is the group of $n\times n$ invertible matrices with entries in $\mathrm{C}_{\Lset}$.
The image of $R$
 is a linear algebraic subgroup of $\GL(n,\mathrm{C}_{\Lset})$,
 which is called the \emph{differential Galois group} of \eqref{eqn:lsys}
 and often denoted by $\mathrm{Gal}(\Lset/\Kset)$.
This representation is not unique
 and depends on the choice of the fundamental matrix $\Phi$,
 but a different fundamental matrix only gives rise to a conjugated representation.
Thus, the differential Galois group is unique up to conjugation
 as an algebraic subgroup of the general linear group.

Let $\G\subset\GL(n,\mathrm{C}_{\Lset})$ be an algebraic group.
Then it contains a unique maximal connected algebraic subgroup $\G^0$,
 which is called the \emph{connected component of the identity}
 or \emph{identity component}.
The identity component $\G^0\subset\G$ is
% a normal algebraic subgroup and
 the smallest subgroup of finite index, i.e., the quotient group $\G/\G^0$ is finite.
 
We also use the following properties of the differential Galois group below
 (See, e.g., Propositions 6.1.2(a) and 6.3.4(b) of \cite{CH11} for their proofs).
 
\begin{prop}
\label{prop:G1}
Let $\Lset\supset\hat{\Kset}\supset\Kset$ be fields extensions
 such that $\Lset\supset\Kset$ and $\hat{\Lset}\supset\Kset$
 are Picard-Vessiot extensions for linear systems of the form \eqref{eqn:lsys}
 and the field $\mathrm{C}_{\Kset}$ of constants is algebraically closed.
Then the following hold$\,:$ 
\begin{enumerate}
\setlength{\leftskip}{-1.8em}
\item[\rm(i)]
If $x\in\Lset\setminus\Kset$,
 then there exists $\sigma\in\mathrm{Gal}(\Lset/\Kset)$ such that $\sigma(x)\neq x;$
\item[\rm(ii)]
$\mathrm{Gal}(\Lset/\hat{\Kset})$ is a normal subgroup of $\mathrm{Gal}(\Lset/\Kset)$,
 i.e., $\mathrm{Gal}(\Lset/\hat{\Kset})\subset\mathrm{Gal}(\Lset/\Kset)$
 and $\sigma^{-1}\hat{\sigma}\sigma\in\mathrm{Gal}(\Lset/\hat{\Kset})$
 for any $\hat{\sigma}\in\mathrm{Gal}(\Lset/\hat{\Kset})$ and $\sigma\in\mathrm{Gal}(\Lset/\Kset)$.
\end{enumerate}
\end{prop}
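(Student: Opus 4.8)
The plan is to recognize both claims as parts of the fundamental theorem of Picard--Vessiot theory: (i) is the assertion that the field of invariants $\Lset^{G}$ of $G:=\mathrm{Gal}(\Lset/\Kset)$ equals the base field $\Kset$, and (ii) is the half of the Galois correspondence stating that an intermediate field which is itself Picard--Vessiot over $\Kset$ corresponds to a normal subgroup. I would prove (ii) directly and reduce (i) to the invariant-field statement.

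For (i), I would first observe that it is equivalent to $\Lset^{G}=\Kset$: if this holds, then any $x\in\Lset\setminus\Kset$ lies outside $\Lset^{G}$, so some $\sigma\in G$ satisfies $\sigma(x)\neq x$, and the converse is clear since $G$ fixes $\Kset$ pointwise. To establish $\Lset^{G}=\Kset$ I would pass to the Picard--Vessiot ring $R=\Kset[\Phi_{ij},(\det\Phi)^{-1}]$ generated by a fundamental matrix $\Phi$ of \eqref{eqn:lsys}; this $R$ is a simple differential ring whose fraction field is $\Lset$, and $G$ acts on it by differential $\Kset$-automorphisms. Because the field of constants $\mathrm{C}_{\Kset}$ is algebraically closed, $\mathrm{Spec}\,R$ is a torsor under $G$, i.e.\ there is an isomorphism $R\otimes_{\Kset}R\cong R\otimes_{\mathrm{C}_{\Kset}}\mathrm{C}_{\Kset}[G]$ compatible with the coaction of $G$. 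Reading off the invariants of this coaction gives $R^{G}=\Kset$, and passing to fraction fields yields $\Lset^{G}=\Kset$, as required.

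For (ii), I would argue elementarily. Since $\hat{\Kset}\supset\Kset$ is a Picard--Vessiot extension, $\hat{\Kset}$ is generated over $\Kset$ by the entries of a fundamental matrix $\Psi$ of some linear system $\dot y=By$ with $B\in\mathrm{gl}(m,\Kset)$. For any $\sigma\in G$ the matrix $\sigma(\Psi)$ is again a fundamental matrix of the same system, because $\sigma$ commutes with the derivation and fixes $B$; hence $\sigma(\Psi)=\Psi N_{\sigma}$ for some $N_{\sigma}\in\GL(m,\mathrm{C}_{\Kset})$, and therefore $\sigma(\hat{\Kset})=\hat{\Kset}$. Writing $H:=\mathrm{Gal}(\Lset/\hat{\Kset})$, the inclusion $H\subset G$ is immediate since $\Kset\subset\hat{\Kset}$. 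Given $\hat{\sigma}\in H$ and $\sigma\in G$, for every $y\in\hat{\Kset}$ we have $\sigma(y)\in\hat{\Kset}$ by the stability just shown, so $\hat{\sigma}(\sigma(y))=\sigma(y)$ and hence $\sigma^{-1}\hat{\sigma}\sigma(y)=y$; thus $\sigma^{-1}\hat{\sigma}\sigma$ fixes $\hat{\Kset}$ pointwise and lies in $H$. This proves $H\trianglelefteq G$.

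The main obstacle is the torsor/invariant step in (i): proving $R^{G}=\Kset$ is the substantive content, and it is exactly there that the algebraic closedness of $\mathrm{C}_{\Kset}$ is indispensable, since the statement can fail over non-closed constant fields, where $G$ may be too small to detect elements of $\Lset\setminus\Kset$. Part (ii), by contrast, is formal once the $G$-stability of $\hat{\Kset}$ has been established, and that stability uses only that a fundamental matrix transforms by a constant invertible matrix under the action of $G$.
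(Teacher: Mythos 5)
The paper does not prove this proposition at all: it simply cites Propositions~6.1.2(a) and 6.3.4(b) of Crespo--Hajto [CH11] for both parts. Your argument is the standard textbook proof of exactly those statements, and it is correct in outline: part (i) reduces to $\Lset^{G}=\Kset$, which is where algebraic closedness of $\mathrm{C}_{\Kset}$ enters via the torsor structure of the Picard--Vessiot ring, and part (ii) is the formal conjugation argument once one knows $\sigma(\hat{\Kset})=\hat{\Kset}$, which follows because a fundamental matrix of the auxiliary system transforms by a matrix over $\mathrm{C}_{\Lset}=\mathrm{C}_{\Kset}$. The only place where you gloss a nontrivial step is the passage from the torsor isomorphism $R\otimes_{\Kset}R\cong R\otimes_{\mathrm{C}_{\Kset}}\mathrm{C}_{\Kset}[G]$ to $R^{G}=\Kset$: ``reading off the invariants'' hides a faithfully flat descent argument (taking $G$-invariants of both sides and using that $R$ is faithfully flat over $\Kset$), or alternatively one can avoid the torsor entirely and argue directly from simplicity of $R$ as in van der Put--Singer. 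This is a presentational gap rather than a mathematical one; the proof is sound.
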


Let the linear system \eqref{eqn:lsys} defined on a Riemann surface $\C$.
A point $\bar{t}\in\C$ is called a \emph{singular point} if $A$ is not bounded at $x$.
A singular point $\bar{t}$ is called \emph{regular}
 if for any sector $\kappa_1<\arg(t-\bar{t})<\kappa_2$ with $\kappa_1<\kappa_2$
 there exists a fundamental matrix $\Phi(t)=(\phi_{ij}(t))$
 such that for some $C>0$ and integer $N$
 $|\phi_{ij}|<C|t-\bar{t}|^N$ as $t\to\bar{t}$ in the sector;
 otherwise it is called \emph{irregular}.

If the Riemann surface $\C$ contains the inifinity $\infty$,
 then we change the independent variable as $\tau=1/t$
 and transform \eqref{eqn:lsys} as
\begin{equation}
\frac{\d}{\d\tau}x=-\frac{1}{\tau^2}Ax.
\label{eqn:lsys1}
\end{equation}
We say that
 the inifinity $\infty$ is a \emph{regular} (resp. \emph{irregular}) \emph{singular point}
 if the origin $\tau=0$ is a regular (resp. irregular) singular point for \eqref{eqn:lsys1}.
 
\subsection{Generalized Morales-Ramis theory}
We next briefly review the extended Morales-Ramis theory for the general system \eqref{eqn:gsys}
 in a necessary setting.
See \cite{AZ10,M99,MR01} for more details on the theory.

Consider the general system \eqref{eqn:gsys}.
Let $x=\phi(t)$ be its nonstationary particular solution.
The VE of \eqref{eqn:gsys} along $x=\phi(t)$ is given by
\begin{equation}
\dot{\xi} = \D f(\phi(t))\xi,\quad \xi \in \Cset^n.
\label{eqn:gve}
\end{equation}
Let $\C$ be a curve given by $x=\phi(t)$
 and let $\overline{\C}$ be its closure containing points at infinity.
Assume that the vector field $f(x)$ can be meromorphically extended
 to a region containing $\overline{\C}$.
We take the meromorphic function field on $\overline{\C}$
 as the coefficient field $\Kset$ of \eqref{eqn:gve}.
Using arguments given by Morales-Ruiz and Ramis \cite{M99,MR01}
 and Ayoul and Zung \cite{AZ10}, we have the following result.

\begin{thm}
\label{thm:MR}
Let $\G$ be the differential Galois group of \eqref{eqn:gve}.
Suppose that the VE \eqref{eqn:gve} has an irregular singularity at infinity. 
If Eq.~\eqref{eqn:gsys} is rationally integrable near $\C$,
 then the identity component $\G^0$ of $\G$ is commutative.
\end{thm}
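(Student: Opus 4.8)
The plan is to reduce the assertion to the classical Morales--Ramis theorem for Hamiltonian systems by way of the cotangent lift of Ayoul and Zung \cite{AZ10}. First I would pass from \eqref{eqn:gsys} on $\Cset^n$ to its cotangent lift on $\Cset^{2n}$, with coordinates $(x,p)$ and the canonical symplectic structure, whose Hamiltonian is $H(x,p)=p\cdot f(x)$. Hamilton's equations read $\dot{x}=f(x)$, $\dot{p}=-\D f(x)^{\top}p$, so the original solution $x=\phi(t)$ lifts to $(x,p)=(\phi(t),0)$, which is again nonstationary, and the VE of the lifted system along it splits as the direct sum of \eqref{eqn:gve} and its adjoint equation $\dot{\eta}=-\D f(\phi(t))^{\top}\eta$.

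Next, assuming that \eqref{eqn:gsys} is rationally integrable with commuting vector fields $f_1,\dots,f_\ell$ and first integrals $F_1,\dots,F_{n-\ell}$, I would exhibit the integrability data for the lift, namely the momentum functions $H_i(x,p)=p\cdot f_i(x)$ for $i=1,\dots,\ell$ together with the pullbacks $F_1,\dots,F_{n-\ell}$. These $n$ functions are rational on $\Cset^{2n}$, have almost everywhere linearly independent derivatives (the $\D F_j$ spanning the base directions and the $\D H_i$ the fibre directions), and are in involution: $\{H_i,H_j\}=-p\cdot[f_i,f_j](x)=0$ by condition (i), $\{H_i,F_j\}=-\D F_j(x)\cdot f_i(x)=0$ by condition (ii), and $\{F_j,F_k\}=0$ since the $F_j$ do not depend on $p$. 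The associated Hamiltonian vector fields $X_{H_i}$, which are precisely the cotangent lifts of the $f_i$, therefore commute, and the lifted system is a rationally, hence meromorphically, Liouville-integrable Hamiltonian system near $(\phi(t),0)$.

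At this point I would invoke the classical Morales--Ramis theorem \cite{M99,MR01}: meromorphic Liouville integrability forces the identity component of the differential Galois group of the lifted VE to be commutative. It remains to descend this conclusion to $\G$, and here the key observation is that if $\Phi$ is a fundamental matrix of \eqref{eqn:gve}, then $(\Phi^{-1})^{\top}$ is a fundamental matrix of the adjoint equation, whose entries are polynomials in the entries of $\Phi$ divided by $\det\Phi$ and hence already lie in the Picard--Vessiot extension generated by $\Phi$. Consequently the Picard--Vessiot extension of the lifted VE coincides with that of \eqref{eqn:gve}, so the Galois group of the lifted VE is canonically identified with $\G$, their identity components agree, and $\G^0$ is commutative as claimed.

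The hypothesis that \eqref{eqn:gve} has an irregular singularity at infinity serves to guarantee that the point at infinity of $\overline{\C}$ is a genuine singular point of the VE, so that the field $\Kset$ of meromorphic functions on $\overline{\C}$ is the appropriate coefficient field in which the Ayoul--Zung construction and the Galois-theoretic comparison above are carried out; it is exactly this singular structure at infinity that will later be exploited to show $\G^0$ is in fact \emph{non}-commutative. I expect the main obstacle to lie not in the formal reduction but in the careful bookkeeping required to run the cotangent lift over the transcendental coefficient field $\Kset$ on the non-compact curve $\overline{\C}$: one must check that rationality, almost-everywhere independence of derivatives, and involutivity of the lifted data are preserved there, and that the identification of the two Picard--Vessiot extensions remains valid in this setting.
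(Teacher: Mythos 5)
Your proposal reconstructs exactly the argument the paper relies on: the paper gives no proof of Theorem~\ref{thm:MR} beyond citing the Ayoul--Zung cotangent-lift reduction to the classical Morales--Ramis theorem, combined with the identification of the Galois group of the lifted VE with $\G$ via the adjoint equation $(\Phi^{-1})^{\top}$, which is precisely what you have written out. The one point where your account is off is the role of the hypothesis of an irregular singularity at infinity: it is not there to make $\Kset$ ``the appropriate coefficient field,'' but is the reason the conclusion must be restricted to \emph{rational} integrability --- rational first integrals and vector fields extend meromorphically across the points of $\overline{\C}$ added at infinity, so the Ziglin--Morales--Ramis invariance argument runs over the field of meromorphic functions on $\overline{\C}$, whereas data that are merely meromorphic near $\C$ need not extend across an irregular singular point; this is exactly the content of Remark~\ref{rmk:2a}, which notes that absent the irregular singularity one may upgrade ``rationally'' to ``meromorphically.''
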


\begin{rmk}\
\label{rmk:2a}
\begin{enumerate}
\setlength{\leftskip}{-1.8em}
\item[\rm(i)]
In Theorem~$\ref{thm:MR}$,
 if the VE \eqref{eqn:gve} has no irregular singularity at infinity,
 then we can replace the word “rationally” with “meromorphically” in the conclusion.
See Section~$4.2$ of {\rm\cite{M99}} or Section~$5.2$ of {\rm\cite{MR01}}
 for the details.
\item[\rm(ii)]
Ayoul and Zung {\rm\cite{AZ10}} treated the VE \eqref{eqn:gve} only on $\C$,
  but we can easily extend their result to our situation using arguments
  given in Section~$4.2$ of {\rm\cite{M99}} or Section~$5.2$ of {\rm\cite{MR01}},
  although the obtained result is weaker$\,:$ only rational integrability is mentioned
  instead of meromorphic one, as stated above.
See also Section~$2$ of {\rm\cite{ALMP18}}.
\end{enumerate}
\end{rmk}

\subsection{Implicit gamma function}

As our final preliminary we give a proof for the fact that
 the implicit gamma function $\Gamma(\alpha,x)$ is not elementary for $\alpha\not\in\Nset$.
We first explain the Risch algorithm \cite{B05,GCL92,R69}, which is required for the proof.
Our exposition mainly follows Chapter~12 of \cite{GCL92} with some modifications.

Let $\Eset$ be a differential field
 such that its constant field $\mathrm{C}_\Eset$ is algebraically closed.
We say that a differential extension $\Fset$ of $\Eset$ is \emph{elementary}
 if for some $n\in\Nset$ there exists $\theta_1,\ldots,\theta_n\in\Eset$
 such that $\Fset=\Eset(\theta_1,\ldots,\theta_n)$
 and one of the following holds for each $j=1,\ldots,n$:
\begin{enumerate}
\setlength{\leftskip}{-1.6em}
\item[\rm(i)]
$\theta_j$ is algebraic over $\Fset_j$;
\item[\rm(ii)]
$\theta_j'/\theta_j=u'$ for some $u\in\Fset_j$;
\item[\rm(iii)]
$\theta_j'=u'/u$ for some $u\in\Fset_j$,
\end{enumerate}
where $\Fset_j=\Eset(\theta_1,\ldots,\theta_{j-1})$
 and the prime represents  differentiation.
When conditions~(ii) and (iii) hold, respectively,
 we have $\theta_j=e^u$ and $\theta_j=\log u$ informally
 and call $\theta_j$ \emph{exponential} and \emph{logarithmic} over $\Fset_j$. 
If $\Fset$ is an elementary extension of $\Cset(x)$,
 then $\Fset$ is called a \emph{field of elementary functions}.
We have the following classical fundamental result on integration.

\begin{thm}[Liouville]
\label{thm:L}
Let $\Fset$ be a field of elementary functions, and let $\varphi\in\Fset$.
Suppose that $\varphi=\psi'$ for some $\psi\in\Gset$,
 where $\Gset$ is an elementary extension of $\Fset$.
Then there exist $v_0,v_1,\ldots,v_m\in\Fset$ and constants $c_1,\ldots,c_m\in\Cset$
 for some $m\ge 0$ such that
\[
\varphi=v_0'+\sum_{j=1}^mc_j\frac{v_j'}{v_j},
\]
i.e.,
\[
\int \varphi\,\d x=v_0+\sum_{j=1}^mc_j\log(v_j).
\]
\end{thm}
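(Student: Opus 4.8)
The plan is to prove this by induction on the number of elementary generators used to build $\Gset$ over $\Fset$, reducing the whole statement to a single \emph{descent lemma}: if $\varphi\in\Fset$ admits a representation of Liouville form $\varphi=w_0'+\sum_{i=1}^k c_i\,w_i'/w_i$ with $c_i\in\Cset$ and all $w_0,\ldots,w_k$ lying in a one-step elementary extension $\Fset(\theta)$ (where $\theta$ is algebraic, exponential, or logarithmic over $\Fset$), then $\varphi$ already admits such a representation with every $w_j\in\Fset$. Granting this, I would write $\Gset=\Fset(\theta_1,\ldots,\theta_n)$ with each $\theta_i$ elementary over $\Fset_i:=\Fset(\theta_1,\ldots,\theta_{i-1})$, and observe that the hypothesis $\varphi=\psi'$ with $\psi\in\Gset$ is itself a Liouville-form representation over $\Gset$ (take $w_0=\psi$ and no logarithmic terms). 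Applying the descent lemma over $\Fset_n$ lowers the representation into $\Fset_n$, and iterating $n$ times brings it down to $\Fset$, which is precisely the asserted conclusion. Throughout, algebraic closedness of the constant field $\Cset$ is used to guarantee that no new constants are created during descent.

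The heart of the argument is the descent lemma, proved by cases on the type of $\theta$. First I would normalize: using the additivity of the logarithmic derivative and $\log(ab)=\log a+\log b$, I may absorb constant factors into the $c_i$ and $w_0$ and assume each $w_i$ with $i\geq 1$ is either an element of $\Fset$ or a monic irreducible polynomial in $\theta$ (when $\theta$ is transcendental), with distinct $w_i$ mutually non-associate. In the \emph{logarithmic} case $\theta'=u'/u$ with $u\in\Fset$, the key fact is that for $p\in\Fset[\theta]$ of degree $d$ the derivative $p'$ has degree $d$ if the leading coefficient is nonconstant and degree at most $d-1$ if it is constant, while for monic irreducible $p$ the quotient $p'/p$ is a proper rational function with denominator exactly $p$. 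Expanding $w_0$ in partial fractions over $\Fset(\theta)$, differentiating, and comparing partial-fraction coefficients in $\varphi=w_0'+\sum c_i w_i'/w_i$, every genuinely $\theta$-dependent term must cancel since $\varphi\in\Fset$; this forces each monic-irreducible $w_i$ to drop out and forces $w_0$ to be of degree at most one in $\theta$ with constant leading coefficient, whose surviving linear term contributes exactly a logarithmic-derivative term $u'/u$ with $u\in\Fset$.

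In the \emph{exponential} case $\theta'/\theta=u'$ with $u\in\Fset$, differentiation preserves $\theta$-degree and sends $a_j\theta^j\mapsto(a_j'+j\,u'a_j)\theta^j$. Matching the coefficient of $\theta^j$ for $j\neq 0$ yields $a_j'/a_j=-j\,u'$, which has no solution $a_j\in\Fset$ unless $a_j=0$; poles at irreducibles $p\neq\theta$ are likewise excluded since $p\nmid p'$. Hence $w_0\in\Fset$, the only admissible $\theta$-dependent logarithmic term is $w_i=\theta$ itself, and it contributes $c_i\,\theta'/\theta=c_i u'=(c_iu)'$, which is absorbed into $v_0$. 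In the \emph{algebraic} case I would apply the trace from $\Fset(\theta)$ to $\Fset$: averaging the representation over the conjugates $\sigma\theta$ and dividing by $d:=[\Fset(\theta):\Fset]$ leaves the left side $\varphi$ unchanged, turns $w_0'$ into $\bigl(\tfrac1d\,\mathrm{tr}\,w_0\bigr)'$, and turns each $w_i'/w_i$ into $\tfrac1d\,N(w_i)'/N(w_i)$, where $\mathrm{tr}\,w_0\in\Fset$ and the norm $N(w_i)\in\Fset$; this gives the desired descent directly.

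The main obstacle is the transcendental \emph{exponential} case, because there differentiation does not lower the $\theta$-degree, so the bookkeeping of which monomials $\theta^j$ can survive is more delicate than in the logarithmic case, and one must verify carefully that the sole new logarithmic contribution collapses to the derivative of an $\Fset$-element (a constant multiple of $u$) rather than a genuine new logarithm in $\theta$. Ensuring that no spurious $\theta$-dependence persists in either $w_0$ or the $w_i$ is where the bulk of the verification lies; once the partial-fraction/degree machinery and the norm-trace argument are in place, the logarithmic and algebraic cases are comparatively routine.
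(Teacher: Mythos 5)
The paper does not prove Theorem~\ref{thm:L} itself; it only cites Section~5.5 of \cite{B05} and Section~12.4 of \cite{GCL92}, and your outline (induction on the tower of generators, reduced to a one-step descent lemma handled by cases on whether $\theta$ is logarithmic, exponential, or algebraic, with the partial-fraction/degree bookkeeping in the transcendental cases and the trace--norm argument in the algebraic case) is exactly the standard argument given in those references. The plan is correct and consistent with the proof the paper relies on.
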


See Section~5.5 of \cite{B05} or Section~12.4 of \cite{GCL92}
 for a proof of Theorem~\ref{thm:L}.

We are now ready to state a necessary case of the Risch algorithm.
Let $\Fset=\Eset(\theta_1,\ldots,\theta_n)$ be an elementary extension of $\Eset=\Cset(x)$
 for some $n\in\Nset$ such that $\theta_n$ is exponential over $\Fset_{n-1}$.
Let $\theta=\theta_n$ and $p(\theta)\in\Fset_{n-1}[\theta]$.
Using Theorem~\ref{thm:L}, we can show that
 if the integral of $\varphi(\theta)$ is an elementary function, then 
\[
\int p(\theta)\,\d x=\sum_{j=0}^kq_j\theta^j+\sum_{j=1}^mc_j\log(v_j),
\]
where $k$ is the order of the polynomial $p$,
 $q_j\in\Fset_{n-1}$, $j=0,\ldots,k$,
 and $v_j\in\Fset_{n-1}$, $j=1,\ldots,m$.
In the above relation, the case of $m=0$,
 which means that the second term does not appear,  is also allowed.
Moreover, we have
\begin{equation}
p_0=q_0'+\sum_{j=1}^mc_j\frac{v_j'}{v_j},\quad
p_j=q_j'+ju'q_j,\quad
j=1,\ldots,k,
\label{eqn:R}
\end{equation}
where $u'=\theta'/\theta$ and $p_0,\ldots,p_k\in\Fset_{n-1}$ such that
\[
p=\sum_{j=0}^kp_j\theta^j.
\]
See Section~12.7 of \cite{GCL92} for the details on derivation of these results.
More general results are found in \cite{B05,GCL92}.
From the first equation of \eqref{eqn:R}
 we see that if $p_0=0$, then $q_0=0$ and $m=0$.
The second equation of \eqref{eqn:R} is also called the \emph{Risch differential equation}
 in this context.

\begin{prop}
\label{prop:R}
When $\alpha\not\in\Nset$, 
 the incomplete gamma function $\Gamma(\alpha,x)$ is not elementary,
 i.e., $\Gamma(\alpha,x)$ does not belong to any field of elementary functions.
\end{prop}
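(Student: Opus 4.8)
The plan is to reduce the statement to the unsolvability of a single Risch differential equation. Since $\frac{\d}{\d x}\Gamma(\alpha,x)=-x^{\alpha-1}e^{-x}$, the function $\Gamma(\alpha,x)$ is elementary if and only if $\int x^{\alpha-1}e^{-x}\,\d x$ is elementary. I would therefore build the tower of monomials with $\theta:=e^{-x}$ as the topmost exponential: take $\Fset_{n-1}$ to be the field generated over $\Cset(x)$ by $\log x$ and $x^{\alpha-1}$ when $\alpha-1\notin\Qset$ (here $\log x$ is logarithmic over $\Cset(x)$ and $x^{\alpha-1}$ is exponential over $\Cset(x,\log x)$, since $(x^{\alpha-1})'/x^{\alpha-1}=((\alpha-1)\log x)'$), and $\Fset_{n-1}=\Cset(x)(x^{\alpha-1})$ when $\alpha-1\in\Qset$. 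In either case $\theta=e^{-x}$ is exponential over $\Fset_{n-1}$ with $u'=\theta'/\theta=-1$, and the integrand is the degree-one polynomial $p(\theta)=x^{\alpha-1}\theta$, so $p_0=0$ and $p_1=x^{\alpha-1}$. Applying the reduction \eqref{eqn:R}, if $\int x^{\alpha-1}e^{-x}\,\d x$ were elementary then $p_0=0$ forces $q_0=0$ and $m=0$, and the surviving equation is
\[
q_1'-q_1=x^{\alpha-1},\qquad q_1\in\Fset_{n-1}.
\]
The whole proof then comes down to showing this equation has no solution in $\Fset_{n-1}$ precisely when $\alpha\notin\Nset$.

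I would split the analysis according to whether $x^{\alpha-1}$ is transcendental or algebraic over $\Cset(x)$. When $\alpha-1\notin\Qset$, the generators $x$, $\log x$, $s:=x^{\alpha-1}$ are algebraically independent, and I would first show $q_1$ is a Laurent polynomial in $s$: a pole at any monic irreducible $P\in\Cset(x,\log x)[s]$ with $P\neq s$ would, because $P\nmid P'$ for an exponential monomial, make $q_1'$ acquire strictly larger pole order than $q_1$ at $P$, which cannot be balanced against the pole-free right-hand side $s$. Writing $q_1=\sum_i a_i s^i$ with $a_i\in\Cset(x,\log x)$ and matching powers of $s$, every coefficient with $i\neq 1$ satisfies $a_i'/a_i=1-i(\alpha-1)/x$, forcing $a_i=\mathrm{const}\cdot e^{x}x^{-i(\alpha-1)}\notin\Cset(x,\log x)$ and hence $a_i=0$, while $a_1$ must satisfy $a_1'-a_1+(\alpha-1)a_1/x=1$. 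A second, identical valuation argument in the logarithmic monomial $\log x$ (using $e^{x}\notin\Cset(x)$ to kill the top $\log x$-coefficient) reduces this to $a_1\in\Cset(x)$. Finally a pole of $a_1$ at finite $x_0\neq 0$ is impossible because $a_1'$ dominates, a pole at $x=0$ of order $m$ would require $m=\alpha-1$, and a polynomial part of positive degree or a nonzero constant are excluded by matching against the constant $1$; since $\alpha-1\notin\Zset$ the order condition $m=\alpha-1$ is vacuous, so no $a_1$ exists.

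When $\alpha-1\in\Qset$, I would write $\alpha-1=p/q$ in lowest terms (allowing $q=1$ in the negative-integer case) and set $z:=x^{1/q}$, so that $\Fset_{n-1}=\Cset(z)$, $z'=\tfrac1q z^{1-q}$, and the equation reads $\tfrac1q z^{1-q}\,\partial_z q_1-q_1=z^{p}$. The same dominance argument confines any pole of $q_1$ to $z=0$, so $q_1=\sum_i c_i z^i$ is a Laurent polynomial; substitution gives the recurrence $\tfrac{i+q}{q}c_{i+q}-c_i=\delta_{i,p}$. Running this in each residue class modulo $q$, finiteness of the principal part forces all coefficients to vanish except in the class of $p$, where the top nonzero coefficient must be $c_p=-1$; the downward recurrence then yields $c_{p-mq}=-q^{-m}\prod_{\ell=0}^{m-1}(p-\ell q)$, which is nonzero for every $m$ because $\gcd(p,q)=1$ prevents $p-\ell q=0$. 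This infinite principal part at $z=0$ contradicts $q_1\in\Cset(z)$; it is exactly the algebraic shadow of the fact that the integration-by-parts recursion for $\int x^{\alpha-1}e^{-x}\,\d x$ terminates only when $\alpha\in\Nset$.

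The main obstacle I anticipate is not any single computation but organizing the reduction uniformly across the transcendental and algebraic regimes. In the transcendental case the delicate point is the two successive valuation arguments—first at irreducible factors in $s=x^{\alpha-1}$, then in $\log x$—needed to bring $q_1$ down into $\Cset(x)$ before the non-integrality of $\alpha-1$ can be invoked; in the algebraic case the crux is proving non-termination of the coefficient recurrence, where $\gcd(p,q)=1$ is precisely what rules out the accidental vanishing that would occur for $\alpha\in\Nset$. Collecting the cases shows $q_1'-q_1=x^{\alpha-1}$ is unsolvable in $\Fset_{n-1}$, whence $\int x^{\alpha-1}e^{-x}\,\d x$, and therefore $\Gamma(\alpha,x)$, is not elementary.
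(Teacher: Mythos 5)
Your argument is correct in substance, but it organizes the Risch reduction the opposite way from the paper, so it is worth recording the comparison. You place $\theta=e^{-x}$ at the top of the tower and arrive at the single Risch differential equation $q'-q=x^{\alpha-1}$ over the field generated by $x^{\alpha-1}$ (and $\log x$), splitting on whether $\alpha-1\in\Qset$. The paper instead splits on whether $-\alpha\in\Zset_{\ge 0}$ or $\alpha\notin\Zset$: in the first case it works over $\Cset(x)(e^{-x})$ and obtains exactly your equation $q'-q=x^{-\beta}$ with $q\in\Cset(x)$, which it refutes by a finite polynomial recurrence rather than your non-terminating Laurent principal part; in the second case it puts $\theta_3=e^{(\alpha-1)\log x}$ on top, obtaining the dual equation $xq'+(\alpha-1)q=xe^{-x}$ over $\Cset(x,e^{-x},\log x)$, and then peels off $\log x$ and $e^{-x}$ by the same leading-coefficient and divisibility arguments you use to peel off $x^{\alpha-1}$ and $\log x$. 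Both routes rest on the same machinery (Theorem~\ref{thm:L} plus the reduction \eqref{eqn:R}) and are of comparable length. What your ordering buys: a single uniform Risch ODE whose unsolvability visibly encodes the non-termination of integration by parts, and, in the subcase $\alpha-1\in\Qset\setminus\Zset$, an honest treatment of the fact that $x^{\alpha-1}$ is then \emph{algebraic} over $\Cset(x)$ rather than a transcendental exponential monomial --- you handle this by descending to $\Cset(x^{1/q})$ and running the coefficient recurrence, whereas the paper's second case treats $\theta_3$ as an exponential monomial for all $\alpha\notin\Zset$ and is silent on this point (harmless for its application in Lemma~\ref{lem:4c}, where only $\alpha\notin\Qset$ arises). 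What the paper's ordering buys: it never leaves purely transcendental towers over $\Cset(x)$ and avoids the branched variable $z=x^{1/q}$. Two small points to tighten in your write-up: the assertions that $e^{x}x^{\beta}\notin\Cset(x,\log x)$ need at least a citation to the structure theorems on algebraic independence of the monomials; and ``$\gcd(p,q)=1$ prevents $p-\ell q=0$'' is literally true only for $q\ge 2$ --- for $q=1$ you must invoke $p<0$, which is exactly the excluded case $\alpha\in\Nset$, as your closing sentence implicitly acknowledges.
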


\begin{proof}
We first assume that $-\alpha\in\Zset_{\ge 0}:=\Nset\cup\{0\}$
 and set $\beta=-\alpha+1\in\Nset$.
Let $\theta_1=e^{-x}$.
From the Risch algorithm we see that if $\Gamma(\alpha,x)$ is elementary, then
\begin{equation}
\int x^{-\beta}\theta_1\d x=q\theta_1,
\label{eqn:prop2a0}
\end{equation}
where $q\in\Cset(x)$.

Suppose that there exists such a rational function $q$.
Differentiating both sides of \eqref{eqn:prop2a0}, we obtain
\begin{equation}
q'-q=x^{-\beta}.
\label{eqn:prop2a1}
\end{equation}
Let $q=q_1/q_2$,
 where $q_1,q_2\in\Cset[x]$ have no common factor and $q_2$ is monic.
We rewrite \eqref{eqn:prop2a1} as
\begin{equation}
q_1'q_2-q_1q_2'-q_1q_2=x^{-\beta}q_2^2,
\label{eqn:prop2a2}
\end{equation}
which yields
\[
x^\beta q_1q_2'=q_2(x^\beta q_1'-x^\beta q_1-q_2).
\]
Hence, $x^\beta q_2'$ is divisible by $q_2$,
 so that $q_2$ is expressed as $q_2=x^k$ for some $k\in\Zset_{\ge 0}$.
From \eqref{eqn:prop2a2} we have
\begin{equation}
xq_1'-(x+k)q_1=x^{k-\beta+1},
\label{eqn:prop2a3}
\end{equation}
which means that $k>\beta-1$.
Let
\begin{equation}
q_1=\sum_{j=0}^\ell a_jx^j,\quad
a_\ell\neq 0,
\label{eqn:q1a}
\end{equation}
where $\ell\in\Zset_{\ge 0}$ and $a_j\in\Cset$, $j=0,\ldots,\ell$, with $a_\ell\neq 0$.
Substituting \eqref{eqn:q1a} into \eqref{eqn:prop2a3},
 we see that $\ell+1=k-\beta+1$, i.e., $k=\ell+\beta$,  and
\[
a_\ell=1,\quad
a_0=0,\quad
(j-k)a_j-a_{j-1}=0,\quad
j=1,\ldots,\ell,
\]
which never occurs since $j-k\le-\beta$ for $j\le\ell$.
So there does not exist a rational function $q$ satisfying \eqref{eqn:prop2a0}.

We next assume that $\alpha\in\Cset\setminus\Zset$.
Let $\theta_2=\log x$ and $\theta_3=e^{(\alpha-1)\theta_2}$.
From the Risch algorithm we see that if $\Gamma(\alpha,x)$ is elementary, then
\[
\int\theta_1\theta_3\d x=q\theta_3,
\]
where $q\in\Cset(x,\theta_1,\theta_2)$.

Suppose that there exists such a rational function $q$.
Differentiating both sides of the above equation, we obtain
\begin{equation}
xq'+(\alpha-1) q=x\theta_1.
\label{eqn:prop2a4}
\end{equation}
Let $q=q_1/q_2$,
 where $q_1,q_2\in\Cset(x,\theta_1)[\theta_2]$ have no common factor
 and $q_2$ is monic as in the above case.
We rewrite \eqref{eqn:prop2a4} as
\[
x(q_1'q_2-q_1q_2')+(\alpha-1)q_1q_2=x\theta_1q_2^2,
\]
which yields
\[
xq_1q_2'=q_2(xq_1'+(\alpha-1)q_1-x\theta_1q_2).
\]
Hence, $q_2'$ is divisible by $q_2$, so that $q_2=1$, since $\theta_2'=1/x$.
Let
\begin{equation}
q=q_1=\sum_{j=0}^\ell a_j\theta_2^j,
\label{eqn:q1b}
\end{equation}
where $\ell\in\Zset_{\ge 0}$ and $a_j\in\Cset(x,\theta_1)$, $j=0,\ldots,\ell$,
 with $a_\ell\neq 0$.
Substituting \eqref{eqn:q1b} into \eqref{eqn:prop2a4}, we have
\[
\sum_{j=0}^\ell((xa_j'+(\alpha-1)a_j)\theta_2^j+ja_j\theta_2^{j-1})=x\theta_1.
\]
If $\ell\ge 1$, then
\[
xa_\ell'+(\alpha-1)a_\ell=0,
\]
so that
\[
a_\ell=C x^{1-\alpha},
\]
which means by $\alpha\not\in\Zset$ that $a_\ell\not\in\Cset(x,\theta_1)$,
 where $C\neq 0$ is a constant.
Hence, we have $\ell=0$, so that $q=q_1\in\Cset(x,\theta_1)$.

Let $q=\tilde{q}_1/\tilde{q}_2$,
 where $\tilde{q}_1,\tilde{q}_2\in\Cset(x)[\theta_1]$ have no common factor
  and $\tilde{q}_2$ is monic.
As above, we obtain
\begin{equation}
x\tilde{q}_1\tilde{q}_2'
 =\tilde{q}_2(x\tilde{q}_1'+(\alpha-1)\tilde{q}_1-x\theta_1\tilde{q}_2).
\label{eqn:prop2a5}
\end{equation}
Hence, $\tilde{q}_2'$ is divisible by $\tilde{q}_2$,
 so that $\tilde{q}_2=\theta_1^k$ for some $k\in\Zset_{\ge 0}$,
 since $\theta_1'=-\theta_1$.
From \eqref{eqn:prop2a5} we have
\begin{equation}
x\tilde{q}_1'+(\alpha-1-kx)\tilde{q}_1=x\theta_1^{k+1}.
\label{eqn:prop2a6}
\end{equation}
Let
\begin{equation}
\tilde{q}_1=\sum_{j=0}^\ell a_j\theta_1^j,\quad
a_\ell\neq 0,
\label{eqn:q1c}
\end{equation}
where $\ell\in\Zset_{\ge 0}$ and $a_j\in\Cset(x)$, $j=0,\ldots,\ell$,
 with $a_\ell\neq 0$.
Substituting \eqref{eqn:q1c} into \eqref{eqn:prop2a6}, we have
\[
\sum_{j=0}^\ell(xa_j'+(\alpha-j-1-kx)a_j)\theta_1^j=x\theta_1^k.
\]
Obviously, $\ell\ge k$.
If $\ell>k$, then
\[
xa_\ell'+(\alpha-\ell-1-kx)a_\ell=0,
\]
so that
\[
a_\ell=Cx^{\ell+1-\alpha}e^{kx},
\]
which means that $a_\ell\not\in\Cset(x)$,
 where $C\neq 0$ is a constant.
Hence, we have $\ell=k$ and
\[
xa_j'+(\alpha-j-1-kx)a_j=0,\quad
j=0,\ldots,\ell-1.
\]
By $a_j\in\Cset(x)$,
 we have $a_j=0$, $j=0,\ldots,\ell-1$, and consequently $\tilde{q}_1=a_\ell\theta_1^\ell$.
Since $\tilde{q}_1,\tilde{q}_2$ have no common factor,
 we obtain $\ell,k=0$,
 so that $q$ is a constant and Eq.~\eqref{eqn:prop2a4} never holds.
We complete the proof.
\end{proof}

\begin{rmk}
In references $($e.g., {\rm\cite{R72})}, a proof for 
 a special case of the gamma function,
\[
\Gamma(0,x)=\int_x^\infty \frac{e^{-x}}{x}\,\d x,
\]
which is also called the \emph{exponential function}, is found.
From the fact, it follows by the recurrence formula
\[
\Gamma(\alpha+1,x)=-x^\alpha e^{-x}+(\alpha+1)\Gamma(\alpha,x)
\]
that $\Gamma(\alpha,x)$ is not elementary for $-\alpha\in\Nset$.
\end{rmk}

% **********************************************************
% Section 3
% **********************************************************

\section{Particular Solutions and the Variational Equations}

We return to the SEIR system \eqref{eqn:sei}.
We easily see that the $(E,I)$-plane $\{(S,E,I)\mid S=0\}$ is invariant in \eqref{eqn:sei}.
Letting $S=0$ in \eqref{eqn:sei}, we have
\begin{align}
\dot{E}=-b E,\quad
\dot{I}=b E-a I.
\label{eqn:ei}
\end{align}
The general solution to \eqref{eqn:ei} is given by
\[
E=C_1e^{-bt},\quad
I=\frac{bC_1}{a-b}e^{-bt}+\left(C_2-\frac{bC_1}{a-b}\right)e^{-a t},
\]
when $a\neq b$, and
\[
E=C_1e^{-at},\quad
I=aC_1te^{-at}+C_2e^{-at},
\]
when $a=b$, where $C_1,C_2\in\Cset$ are any constants.
Hence, Eq.~\eqref{eqn:sei} has particular solutions
\begin{equation}
(S,E,I)=\left(0,C_1e^{-bt},\bar{I}(t)\right),
\label{eqn:sol1}
\end{equation}
where 
\begin{equation}
\bar{I}(t)=\frac{bC_1}{a-b}e^{-bt}+\left(C_2-\frac{bC_1}{a-b}\right)e^{-a t}
\label{eqn:solI1}
\end{equation}
for $a\neq b$ and
\begin{equation}
\bar{I}(t)=aC_1te^{-at}+C_2e^{-at}
\label{eqn:solI2}
\end{equation}
for $a=b$.
Henceforth we take $C_1=0$,
 which we will also assume in Sections~4 and 5.
In particular, Eqs.~\eqref{eqn:solI1} and \eqref{eqn:solI2} reduce to
\begin{equation}
\bar{I}(t)=C_2e^{-a t}.
\label{eqn:solI}
\end{equation}

Now we introduce the three new state variables
\begin{equation}
X=\exp\left(\frac{r}{a}(S+E+I)\right),\quad
Y=\exp\left(\frac{r}{a}(S+E)\right),\quad
Z=\exp\left(\frac{r}{a}S\right)
\label{eqn:nsv}
\end{equation}
and extend the SEIR system \eqref{eqn:sei} to
\begin{equation}
\begin{split}
&
\dot{S}=-rSI,\quad
\dot{E}=rSI-b E,\quad
\dot{I}=b E-a I,\\
&
\dot{X}=-rIX,\quad
\dot{Y}=-\frac{rb}{a}EY,\quad
\dot{Z}=-\frac{r^2}{a}SIZ.
\end{split}
\label{eqn:seixyz}
\end{equation}
We easily see that the three vector fields
\begin{align*}
(0,0,0,X,0,0),\quad
(0,0,0,0,Y,0),\quad
(0,0,0,0,0,Z)
\end{align*}
are commutative with each other along with \eqref{eqn:seixyz}.
Thus, we have the following.

\begin{prop}
\label{prop:3a}
If the system \eqref{eqn:sei} is Bogoyavlenskij-integrable
 such that the first integrals and commutative vector fields are rational functions
 of $S$, $E$, $I$, $e^S$, $e^E$ and $e^I$,
 then the extended system \eqref{eqn:seixyz} is rationally Bogoyavlenskij-integrable.
\end{prop}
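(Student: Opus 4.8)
The plan is to push the assumed integrable structure of \eqref{eqn:sei} forward along the change of variables \eqref{eqn:nsv}. Writing $E_1=e^{(r/a)S}$, $E_2=e^{(r/a)E}$, $E_3=e^{(r/a)I}$ for the exponentials occurring in the data, the definitions \eqref{eqn:nsv} read $Z=E_1$, $Y=E_1E_2$, $X=E_1E_2E_3$, so conversely $E_1=Z$, $E_2=Y/Z$, $E_3=X/Y$, and the correspondence is an invertible monomial substitution. Under it every function that is rational in $S,E,I$ and in $E_1,E_2,E_3$ becomes a rational function of $S,E,I,X,Y,Z$; for instance the known first integral $F_2$ of \eqref{eqn:F12} turns into $S/X$, and \eqref{eqn:seixyz} gives $\tfrac{\d}{\d t}(S/X)=0$ directly.

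Next I would lift the $(\ell,3-\ell)$-structure. Each commuting field $f_j=(f_j^S,f_j^E,f_j^I)$ of \eqref{eqn:sei} has the canonical lift
\[
\tilde f_j=f_j+\tfrac{r}{a}(f_j^S+f_j^E+f_j^I)X\partial_X+\tfrac{r}{a}(f_j^S+f_j^E)Y\partial_Y+\tfrac{r}{a}f_j^SZ\partial_Z,
\]
the unique field tangent to the graph $\{X=e^{(r/a)(S+E+I)},\dots\}$ that projects to $f_j$; for $j=1$ it reproduces the right-hand side of \eqref{eqn:seixyz}, and after the substitution above its coefficients are rational in $S,E,I,X,Y,Z$. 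Since this lift respects the Lie bracket (the graphs form a foliation, so the associated connection is flat), one has $[\tilde f_j,\tilde f_k]=0$, and each lifted first integral $\tilde F_k$ satisfies $\tilde f_j(\tilde F_k)=0$; thus the $\ell$ fields $\tilde f_j$ and the $3-\ell$ functions $\tilde F_k$ already reproduce the original structure rationally. I would then adjoin the three fields $v_1=X\partial_X$, $v_2=Y\partial_Y$, $v_3=Z\partial_Z$ recorded before the proposition, which commute with one another and with every $\tilde f_j$, aiming at an $(\ell+3,3-\ell)$-integrable structure for \eqref{eqn:seixyz}.

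The hard part is condition (ii) of Definition~\ref{dfn:1a} for the enlarged list of fields: the $3-\ell$ first integrals must be annihilated by $v_1,v_2,v_3$ as well, whereas the naive lifts are not, e.g.\ $v_1(S/X)=-S/X\neq0$. Hence the real content is to arrange the first integrals so that they are invariant under the three scalings, i.e.\ genuinely free of $X,Y,Z$. Here I would use that $v_1,v_2,v_3$ commute with the flow and so equip the coefficient field with an $(X,Y,Z)$-multigrading that $\tilde f_1$ preserves; decomposing each lifted first integral into multihomogeneous parts, every part is again a first integral of the flow, and the task reduces to producing the required number of weight-zero (equivalently $X,Y,Z$-free) rational first integrals. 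Establishing that the assumed integrability of \eqref{eqn:sei} indeed supplies these — equivalently, that the integrable structure cannot be forced to carry a nonzero $Y$- or $Z$-weight that no other invariant can cancel — is the crux of the argument. Once $3-\ell$ such common first integrals are secured, the two independence clauses are immediate, since the $v_i$ are vertical while $\tilde f_1,\dots,\tilde f_\ell$ project to the independent $f_j$, and the differentials of $X,Y,Z$-free first integrals remain independent; this would complete the $(\ell+3,3-\ell)$-integrability and hence the rational Bogoyavlenskij-integrability of \eqref{eqn:seixyz}.
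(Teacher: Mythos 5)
Your setup---the monomial substitution $E_1=Z$, $E_2=Y/Z$, $E_3=X/Y$, the canonical lift $\tilde f_j$ of each commuting field, the check that the lift preserves Lie brackets, and the three pairwise commuting vertical fields $X\partial_X$, $Y\partial_Y$, $Z\partial_Z$---is essentially all the paper itself records before the proposition (the paper gives no further argument beyond the commutativity of the three vertical fields). But your proof does not close, and you say so yourself: condition (ii) of Definition~\ref{dfn:1a} for the enlarged family $\{\tilde f_1,\dots,\tilde f_\ell,v_1,v_2,v_3\}$ is left as ``the crux.'' This is a genuine gap, not a loose end. A rational function of $S,E,I,X,Y,Z$ annihilated by all three scalings $v_1,v_2,v_3$ is necessarily free of $X,Y,Z$, so the $(\ell+3,3-\ell)$ scheme you aim at would require $3-\ell$ first integrals of \eqref{eqn:sei} that are rational in $S,E,I$ \emph{alone}---strictly more than the hypothesis provides, since the hypothesis only gives integrals rational in $S,E,I$ and the exponentials (the model case $F_2$ becomes $S/X$, with $v_1(S/X)=-S/X\neq 0$, as you note). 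Your multigrading remark does show each integral may be taken multihomogeneous, i.e.\ of the form $X^{a}Y^{b}Z^{c}R(S,E,I)$, but such a function is an \emph{eigenfunction} of the $v_i$, not an invariant, and when the weight vectors of the $3-\ell$ integrals are linearly independent no product of powers of them is both weight-zero and functionally independent. So one must either prove the weights vanish, or discard some of the $v_i$ and compensate with additional rational first integrals genuinely involving $X,Y,Z$---and a multihomogeneous rational first integral $X^aY^bZ^cH$ of \eqref{eqn:seixyz} exists only when $He^{(r/a)(a(S+E+I)+b(S+E)+cS)}$ is a Darboux-type first integral of \eqref{eqn:sei}, which the hypothesis does not supply. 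As it stands, the target structure is not merely unverified; for generic weights it cannot be achieved, and the argument must be rerouted.

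A second, smaller mismatch: the hypothesis of the proposition speaks of rationality in $e^S$, $e^E$, $e^I$, while \eqref{eqn:nsv} uses $e^{(r/a)S}$, etc.; your substitution silently replaces the former by the latter, which is legitimate only when $r/a\in\Zset$ (or after adjusting the exponents in \eqref{eqn:nsv}), since $e^{S}=Z^{a/r}$ is not a rational function of $Z$ in general. To be fair, both difficulties are inherited from the paper, which states the proposition without proof; but to count as a proof your argument would have to resolve the weight problem concretely, e.g.\ by the Darboux-polynomial/cofactor analysis you hint at, showing that the assumed integrable structure always yields either enough weight-zero integrals or a matching reduction in the number of vertical fields retained.
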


\begin{rmk}
We can apply arguments similar to ones given below for the SEIR system \eqref{eqn:sei} directly 
 but only obtain a very weak result$\,:$
 it is not Bogoyavlenskij-integrable
 such that the first integrals and commutative vector fields are rational functions
 of $S$, $E$ and $I$ unlike $F_2(S,E,I)$.
So we need the above extension
 to treat such transcendental first integrals and commutative vector fields.
\end{rmk}
 
Hereafter, based on Proposition~\ref{prop:3a},
 we prove that the extended system \eqref{eqn:seixyz}
 is not rationally Bogoyavlenskij-integrable
 for the proof of Theorem~\ref{thm:main}.

The system \eqref{eqn:seixyz} has particular solutions
\begin{equation}
(S,E,I,X,Y,Z)=\left(0,0,\bar{I}(t),\bar{X}(t),1,1\right),
\label{eqn:solxyz}
\end{equation}
corresponding to \eqref{eqn:solI1} or \eqref{eqn:solI2}, where
\begin{equation}
\bar{X}(t)
 =\exp\left(\frac{rC_2}{a}e^{-a t}\right).
\label{eqn:solX1}
\end{equation}
The VE of \eqref{eqn:seixyz} around the solution \eqref{eqn:solxyz} is given by
\begin{equation}
\begin{split}
&
\delta\dot{S}=-r\bar{I}(t)\delta S,\quad
\delta\dot{E}=r\bar{I}(t)\delta S-b \delta E,\quad
\delta\dot{I}=b\delta E-a\delta I,\\
&
\delta\dot{X}=-r\bar{X}(t)\delta I-r\bar{I}(t)\delta X,\quad
\delta\dot{Y}=-\frac{rb}{a}\delta E,\quad
\delta\dot{Z}=-\frac{r^2}{a}\bar{I}(t)\delta S.
\end{split}
\label{eqn:ve}
\end{equation}
Let
\[
C_3=\exp\left(-\frac{rC_2}{a}\right).
\]
We solve the initial value problem of \eqref{eqn:ve} and obtain the following:
\begin{equation}
\begin{split}
&
\delta S=C_3\bar{X}(t),\quad
\delta E=rC_2C_3e^{-bt}\int_0^t e^{-(a-b)\tau}\bar{X}(\tau)\d\tau,\\
&
\delta I=be^{-at}\int_0^t\delta E(\tau)e^{a\tau}\d\tau,\quad
\delta X=-r\bar{X}(t)\int_0^t\delta I(\tau)\d\tau,\\
&
\delta Y=-\frac{rb}{a}\int_0^t\delta E(\tau)\d\tau,\quad
\delta Z=-\frac{r^2C_2C_3}{a}\int_0^t e^{-a\tau}\bar{X}(\tau)\d\tau
\end{split}
\label{eqn:solve1}
\end{equation}
when $(\delta S(0),\delta E(0),\delta I(0),\delta X(0),\delta Y(0),\delta Z(0))=(1,0,0,0,0,0)$;
\begin{equation}
\begin{split}
&
\delta S=0,\quad
\delta E=e^{-bt},\quad
\delta I=\frac{b}{a-b}(e^{-bt}-e^{-at}),\\
&
\delta X=\frac{r}{a}\left(\frac{a}{a-b}e^{-bt}-\frac{b}{a-b}e^{-at}-1\right)\bar{X}(t),\\
&
\delta Y=\frac{r}{a}(e^{-bt}-1),\quad
\delta Z=0
\end{split}
\label{eqn:solve2a}
\end{equation}
for $a\neq b$ and
\begin{equation}
\begin{split}
&
\delta S=0,\quad
\delta E=e^{-at},\quad
\delta I=ate^{-at},\\
&
\delta X=\frac{r}{a}((at+1)e^{-at}-1)\bar{X}(t),\quad
\delta Y=\frac{r}{a}(e^{-at}-1),\quad
\delta Z=0
\end{split}
\label{eqn:solve2b}
\end{equation}
for $a=b$ when $(\delta S(0),\delta E(0),\delta I(0),\delta X(0),\delta Y(0),\delta Z(0))=(0,1,0,0,0,0)$;
\begin{equation}
\begin{split}
&
\delta S=0,\quad
\delta E=0,\quad
\delta I=e^{-at},\\
&
\delta X=\frac{r}{a}(e^{-at}-1)\bar{X}(t),\quad
\delta Y=0,\quad
\delta Z=0
\end{split}
\label{eqn:solve3}
\end{equation}
when $(\delta S(0),\delta E(0),\delta I(0),\delta X(0),\delta Y(0),\delta Z(0))=(0,0,1,0,0,0)$;
\begin{equation}
\begin{split}
&
\delta S=0,\quad
\delta E=0,\quad
\delta I=0,\\
&
\delta X=C_3\bar{X}(t),\quad
\delta Y=0,\quad
\delta Z=0
\end{split}
\label{eqn:solve4}
\end{equation}
when $(\delta S(0),\delta E(0),\delta I(0),\delta X(0),\delta Y(0),\delta Z(0))=(0,0,0,1,0,0)$;
\begin{equation}
\delta S=0,\quad
\delta E=0,\quad
\delta I=0,\quad
\delta X=0,\quad
\delta Y=1,\quad
\delta Z=0
\label{eqn:solve5}
\end{equation}
when $(\delta S(0),\delta E(0),\delta I(0),\delta X(0),\delta Y(0),\delta Z(0))=(0,0,0,0,1,0)$; and
\begin{equation}
\delta S=0,\quad
\delta E=0,\quad
\delta I=0,\quad
\delta X=0,\quad
\delta Y=0,\quad
\delta Z=1
\label{eqn:solve6}
\end{equation}
when $(\delta S(0),\delta E(0),\delta I(0),\delta X(0),\delta Y(0),\delta Z(0))=(0,0,0,0,0,1)$.
We easily see that the VE \eqref{eqn:ve} has an irregular singularity at infinity
 since as $\tau\to -0$
\[
\frac{1}{\tau^2}\bar{I}(1/\tau),\frac{1}{\tau^2}\bar{X}(1/\tau)\to\infty
\]
and e.g., $\delta S(1/\tau)\to\infty$ hyper-exponentially in \eqref{eqn:solve1}.

In the next two sections, we take an appropriate differential field $\Kset$ for the VE \eqref{eqn:ve}
 and write its Picard-Vessiot extension and the differential Galois group
 as $\Lset$ and $\G:=\mathrm{Gal}(\Lset/\Kset)$, respectively.

% **********************************************************
% Section 4
% **********************************************************

\section{Proof of Theorem~\ref{thm:main} for $a\neq b$}

We prove Theorem~\ref{thm:main} for $a\neq b$.
Let $\Phi(t)$ denote a fundamental matrix of \eqref{eqn:ve}
 of which the $j$th column vector $\Phi_j(t)$
 is given by one of \eqref{eqn:solve1}, \eqref{eqn:solve2a} and \eqref{eqn:solve3}-\eqref{eqn:solve6}
 for $j=1,\ldots,6$.
We take $C_1=0$ in \eqref{eqn:solI1}
 and assume that $b/a\not\in\Qset$.

Let $\Kset=\Cset(e^{-at},\bar{X}(t))$ and let $\sigma\in\G$.
Obviously, by \eqref{eqn:solve3}-\eqref{eqn:solve6}, $\sigma(\Phi_j(t))=\Phi_j(t)$, $j\ge 3$.
We next consider $\Phi_2(t)$.
We first show the following lemma.

\begin{lem}
\label{lem:4a}
We have $e^{-kbt}\not\in\Kset$ for any $k\in\Nset$.
\end{lem}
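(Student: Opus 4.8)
The plan is to set $u=e^{-at}$ and $w=\bar X(t)$, so that $\Kset=\Cset(u,w)$, and to analyze the multiplicative structure of $\Kset$ through logarithmic derivatives, much as in the Risch-type computations of Section~2.3. From \eqref{eqn:solX1} one has $u'=-au$ and, since $\bar X=\exp\bigl(\tfrac{rC_2}{a}u\bigr)$, also $w'/w=-rC_2\,u\in\Cset(u)$. First I would record that $u$ is transcendental over $\Cset$ (a nonconstant exponential) and that $w$ is transcendental over $\Cset(u)$ (being exponential over $\Cset(u)$, by the pole argument below), so that $\Kset=\Cset(u)(w)$ is purely transcendental of transcendence degree two and $\Cset[u,w]$ is a polynomial ring, hence a UFD. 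Throughout I use that $r,C_2\neq 0$ (the solution \eqref{eqn:solxyz} being nonstationary) and that $b\neq 0$, which holds since $b/a\not\in\Qset$.

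Suppose, for contradiction, that $v:=e^{-kbt}\in\Kset$ for some $k\in\Nset$. Then $v$ is a unit of $\Kset$ with constant logarithmic derivative $v'/v=-kb$. The heart of the argument is to show that any such unit is, up to a constant factor, a monomial $u^{m}w^{n}$. I would factor $v$ over $L:=\Cset(u)$ in the one-variable polynomial ring $L[w]$ as $v=c_0(u)\,w^{n}\prod_i P_i(w)^{e_i}$, where $c_0\in L^{\times}$, $n\in\Zset$, and the $P_i\in L[w]$ are distinct monic irreducibles of positive $w$-degree different from $w$. Differentiating gives $v'/v=c_0'/c_0+n\,w'/w+\sum_i e_i P_i'/P_i$, whose first two terms lie in $L$. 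Since the $P_i$ are pairwise coprime, for $v'/v$ to lie in $L$ each polar part must vanish, i.e.\ $P_i\mid P_i'$ for every $i$.

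The next step exploits that $w$ is exponential: for $P=\sum_j b_j w^j\in L[w]$ monic of degree $d\ge 1$ one computes $P'=\sum_j\bigl(b_j'+j\,\eta\,b_j\bigr)w^j$ with $\eta=w'/w=-rC_2u$, so $\deg_w P'=d$ with leading coefficient $d\eta\neq 0$; hence $P\mid P'$ forces $P'=d\eta\,P$, which coefficientwise reads $b_j'=(d-j)\eta\,b_j$. For $j<d$ with $b_j\neq0$ this gives $b_j'/b_j=-(d-j)rC_2\,u$; but the logarithmic derivative of a rational function $b_j\in\Cset(u)$ is $-au\sum_s m_s/(u-\rho_s)$, which, being required to be a polynomial, can only be the constant $-am_0$ coming from a zero or pole at $u=0$, and hence cannot equal a nonzero multiple of $u$. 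Thus $b_j=0$ for all $j<d$ and $P=w^{d}$, contradicting irreducibility unless $P=w$, which is excluded. Therefore no $P_i$ occurs and $v=c_0(u)\,w^{n}$.

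Finally I would insert this into $v'/v=-kb$ to get $c_0'/c_0-nrC_2u=-kb$. The same pole analysis applied to $c_0\in\Cset(u)^{\times}$ shows $c_0'/c_0=-au\sum_s m_s/(u-\rho_s)$ can have no finite pole, forcing $c_0=c\,u^{m}$ and $c_0'/c_0=-am$. Matching the coefficient of $u$ then gives $n=0$, and the constant terms give $am=kb$, i.e.\ $kb/a=m\in\Zset$. This contradicts $b/a\not\in\Qset$ (so $kb/a\not\in\Qset$ for $k\in\Nset$), proving $e^{-kbt}\not\in\Kset$. I expect the main obstacle to be the Darboux-type step showing $P_i\mid P_i'$ is impossible for irreducible $P_i\neq w$, where the exponential nature of $w$ over $\Cset(u)$ is essential; everything else is routine pole counting in $\Cset(u)$, entirely parallel to the denominator-cancellation arguments of Section~2.3.
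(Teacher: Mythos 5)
Your proof is correct, and it rests on the same underlying mechanism as the paper's --- a Risch-type analysis exploiting that $\bar X(t)$ is exponential over $\Cset(e^{-at})$ with logarithmic derivative $-rC_2e^{-at}$, and that logarithmic derivatives of elements of $\Cset(e^{-at})$ are forced by pole counting to be constants --- but you package it differently. The paper writes $e^{-kbt}=g_1/g_2$ in lowest terms in $\Cset(e^{-at})[\bar X(t)]$, shows $g_2$ divides $\dot g_2$, and then solves the resulting ODEs for the coefficients $h_j$ of $g_2$ and $g_1$ explicitly, reaching a contradiction because the explicit solutions (such as $C_6e^{-kbt}\exp(-\ell rC_2e^{-at}/a)$) do not lie in $\Cset(e^{-at})$. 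You instead prove the stronger structural statement that any nonzero element of $\Kset$ with constant logarithmic derivative must be of the form $c\,u^mw^n$ --- via unique factorization in $\Cset(u)[w]$, the partial-fraction argument forcing $P_i\mid P_i'$, and the Darboux-type exclusion of irreducibles other than $w$ --- and then reduce the lemma to the clean arithmetic obstruction $kb/a=m\in\Zset$, contradicting $b/a\notin\Qset$. Your version buys a tidier endgame (the paper's final assertion that $h_\ell\notin\Cset(e^{-at})$ is left somewhat implicit) and makes explicit both the transcendence of $\bar X(t)$ over $\Cset(e^{-at})$ and the role of $r,C_2\neq 0$, which the paper uses tacitly; the paper's version is more hands-on and avoids invoking the factorization machinery by name. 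I see no gap in your argument.
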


\begin{proof}
We assume that $e^{-kbt}\in\Kset$ for $k\in\Nset$, and write
\begin{equation}
e^{-kbt}=\frac{g_1}{g_2},
\label{eqn:lem4a1}
\end{equation}
where $g_1,g_2\in\Cset(e^{-at})[\bar{X}(t)]$, $g_1,g_2$ have no common factor and $g_2$ is monic. 
Differentiating both sides of the above equation yields
\[
-kbe^{-kbt}=\frac{\dot{g}_1g_2-g_1\dot{g}_2}{g_2^2}=-kb\frac{g_1}{g_2},
\]
so that
\[
g_1\dot{g}_2=g_2(\dot{g}_1+kbg_1).
\]
Hence, $\dot{g}_2$ is divisible by $g_2$ since by \eqref{eqn:solI} and \eqref{eqn:ve},
\[
\dot{\bar{X}}(t)=-rC_2e^{-at}\bar{X}(t),
\]
which means that $\dot{g}_1,\dot{g}_2\in\Cset(e^{-at})[\bar{X}(t)]$.

Let
\begin{equation}
g_2=\sum_{j=0}^\ell h_j\bar{X}(t)^j
\label{eqn:lem4a2}
\end{equation}
where $\ell\in\Zset_{\ge 0}$ and $h_j\in\Cset(e^{-at})s$, $j=0,\ldots,\ell$, with $h_\ell=1$.
Then we compute
\[
\dot{g}_2
=\sum_{j=0}^\ell(\dot{h}_j-jrC_2h_je^{-at})\bar{X}(t)^j
\]
Since $\dot{g}_2$ is divisible by $g_2$, we have
\begin{equation}
\dot{g}_2=-\ell rC_2g_2.
\label{eqn:lem4a3}
\end{equation}
Substituting \eqref{eqn:lem4a2} into the above equation yields
\[
\sum_{j=0}^\ell(\dot{h}_j-jrC_2h_je^{-at})\bar{X}(t)^j=-\ell rC_2\sum_{j=0}^\ell h_j\bar{X}(t)^j.
\]
Assume that $\ell\ge 1$.
Then for $j=0,\ldots,\ell-1$
\begin{align*}
\dot{h}_j=(je^{-at}-\ell) rC_2 h_j,
\end{align*}
so that
\[
h_j=C_{4j}e^{-\ell rC_2t}\exp\left(-\frac{jrC_2}{a}e^{-at}\right),
\]
where $C_{4j}\in\Cset$ is a constant.
So $C_{4j}=0$ for $j=1,\ldots,\ell-1$.
If $\ell rC_2/a\not\in\Zset$,
 then $C_{40}=0$ and $g_2=\bar{X}(t)^\ell$, since $e^{-\ell rC_2t}\not\in\Cset(e^{-at})$.
If $m=\ell rC_2/a\in\Zset$,
 then $g_2=\bar{X}(t)^\ell+C_{40}e^{-mat}$.
Hence, Eq.~\eqref{eqn:lem4a3} never holds and a contradiction occurs.
Thus, $\ell=0$ and $g_2=1$.

Let
\begin{equation}
g_1=\sum_{j=0}^\ell h_j\bar{X}(t)^j
\label{eqn:lem4a4}
\end{equation}
where $\ell\in\Zset_{\ge 0}$ and $h_j\in\Cset(e^{-at})$, $j=0,\ldots,\ell$, with $h_\ell\neq 0$.
By \eqref{eqn:lem4a1} we have
\[
\dot{g}_1=-kbe^{-kbt}=-kbg_1.
\]
Substituting \eqref{eqn:lem4a4} into the above equation yields
\[
\sum_{j=0}^\ell(\dot{h}_j-jrC_2h_je^{-at})\bar{X}(t)^j=-kb\sum_{j=0}^\ell h_j\bar{X}(t)^j.
\]
In particular,
\[
\dot{h}_\ell=(\ell rC_2e^{-at}-kb)h_\ell,
\]
so that
\[
h_\ell=C_6e^{-kbt}\exp\left(-\frac{\ell rC_2}{a}e^{-at}\right)\not\in\Cset(e^{-at}),
\]
where $C_6\neq 0$ is a constant.
Thus, we have  a contradiction and obtain the desired result.
\end{proof}

Let $\Phi_2(t)=(\delta S(t),\delta E(t),\delta I(t),\delta X(t),\delta Y(t),\delta Z(t))^\T$,
 where the superscript {\scriptsize T} represents transposition.
Since
\[
\frac{\d}{\d t}\sigma(e^{-bt})=-b\sigma(e^{-bt}),
\]
we have
\begin{equation}
\sigma(e^{-bt})=ce^{-bt},
\label{eqn:4a}
\end{equation}
where $c\in\Cset^\ast$.
If $c$ is a $k$-th root of unity, then
\[
\sigma(e^{-kbt})=\sigma(e^{-bt})^k=(c(e^{-bt}))^k=e^{-kbt}.
\]
Hence, by Proposition~\ref{prop:G1}(i) and  Lemma~\ref{lem:4a},
 $c$ is not a root of unity for some $\sigma\in\G$.
From \eqref{eqn:solve2a} and \eqref{eqn:4a} we have
\[
\sigma(\delta E(t))=c\delta E(t),\quad
\sigma(\delta I(t))=\gamma(ce^{-bt}-e^{-at})=c\delta I(t)+\gamma(c-1)e^{-at},
\]
where $\gamma=b/(a-b)$.
Moreover,
\begin{align*}
\sigma(\delta X(t))
=&\frac{r}{a}\left(\frac{ac}{a-b}e^{-bt}-\frac{b}{a-b}e^{-at}-1\right)\bar{X}(t)\\
=&c\delta\bar{X}(t)+\frac{\gamma(c-1)r}{a}(e^{-at}-1)\bar{X}(t)
+\frac{(\gamma+1)(c-1)r}{a}\bar{X}(t)
\end{align*}
and
\[
\sigma(\delta Y(t))=\frac{r}{a}(ce^{-bt}-1)
 =c\delta Y(t)+\frac{(c-1)r}{a}.
\]
Thus, we see that
\[
\sigma(\Phi_2(t))=c\Phi_2(t)+\gamma(c-1)\Phi_3(t)
 +\frac{(\gamma+1)(c-1)r}{aC_3}\Phi_4(t)+\frac{(c-1)r}{a}\Phi_5(t),
\]
where $c$ is not a root of unity for some $\sigma\in\G$.
We have the following for $\Phi_1(t)$.

\begin{lem}
\label{lem:4b}
Let $\sigma\in\G$.
We have
\[
\sigma(\Phi_1(t))=\Phi_1(t)+\alpha_1\Phi_2(t)+\alpha_2\Phi_3(t)+\alpha_3\Phi_4(t)
 +\alpha_4\Phi_5(t)+\alpha_5\Phi_6(t)
\]
for some $\alpha_j\in\Cset$, $j=1,\ldots,5$.
\end{lem}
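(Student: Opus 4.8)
The plan is to use the defining property of the differential Galois group together with the explicit shape of the first column of the fundamental matrix. Since the entries of $\Phi(t)$ lie in the Picard-Vessiot extension $\Lset$ and $\sigma\in\G=\mathrm{Gal}(\Lset/\Kset)$ commutes with the derivation and fixes every coefficient of the VE \eqref{eqn:ve} (all of which lie in $\Kset=\Cset(e^{-at},\bar{X}(t))$), the image $\sigma(\Phi_1(t))$ is again a solution of \eqref{eqn:ve}. As $\Phi_1,\dots,\Phi_6$ form a basis of the solution space over the field of constants $\Cset$, there are constants $c_1,\dots,c_6\in\Cset$ with $\sigma(\Phi_1)=\sum_{j=1}^6 c_j\Phi_j$; equivalently $\sigma(\Phi)=\Phi M_\sigma$ and $(c_1,\dots,c_6)^\T$ is the first column of $M_\sigma$. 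Thus the whole content of the lemma is the single assertion $c_1=1$, after which one sets $\alpha_j=c_{j+1}$ for $j=1,\dots,5$.

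To pin down $c_1$ I would compare the $\delta S$-components on both sides. Inspecting the six solutions \eqref{eqn:solve1}, \eqref{eqn:solve2a} and \eqref{eqn:solve3}--\eqref{eqn:solve6}, only $\Phi_1$ has a nonzero first component, namely $\delta S=C_3\bar{X}(t)$, while $\Phi_2,\dots,\Phi_6$ all have $\delta S=0$. Hence the first component of $\sigma(\Phi_1)=\sum_j c_j\Phi_j$ reads $\sigma\bigl(C_3\bar{X}(t)\bigr)=c_1\,C_3\bar{X}(t)$. Now $C_3=\exp(-rC_2/a)\in\Cset^\ast$ is a nonzero constant and $\bar{X}(t)\in\Kset$ by the very definition of $\Kset$, so $\sigma$ fixes $C_3\bar{X}(t)$ pointwise and $\sigma(C_3\bar{X}(t))=C_3\bar{X}(t)$. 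Since $C_3\bar{X}(t)\not\equiv 0$, this forces $c_1=1$, which completes the argument.

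There is no serious obstacle in this lemma; it is essentially bookkeeping on the fundamental matrix. The only points requiring care are confirming that $\sigma(\Phi_1)$ really is a $\Cset$-linear combination of the columns (which rests on $\sigma$ commuting with the derivation and fixing $\Kset$, so that it preserves the solution space of \eqref{eqn:ve}) and verifying that $\bar{X}(t)$ genuinely lies in the coefficient field $\Kset$ and is not identically zero, so that it is fixed by $\sigma$ and may be cancelled. The role of the lemma in the larger proof is that, combined with the earlier computation giving $\sigma(\Phi_2)=c\Phi_2+\cdots$ with $c$ not a root of unity for some $\sigma\in\G$, the triangular action on $\Phi_1$ recorded here constrains the matrices $M_\sigma$ enough to exhibit the noncommutativity of the identity component $\G^0$.
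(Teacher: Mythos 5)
Your proof is correct, and it takes a more streamlined route than the paper. The paper proves the lemma by integrating the variational equation \eqref{eqn:ve} component by component: starting from $\sigma(\delta S)=\delta S$ it solves for $\sigma(\delta E)$, then $\sigma(\delta I)$, $\sigma(\delta X)$, $\sigma(\delta Y)$, $\sigma(\delta Z)$ in turn, each time introducing an integration constant and recognizing the resulting expression as the corresponding component of $\Phi_1$ plus a constant combination of the other columns. You instead invoke the standard representation $\sigma(\Phi)=\Phi M_\sigma$ with $M_\sigma\in\GL(6,\Cset)$, which reduces the lemma to the single claim that the $(1,1)$-entry of $M_\sigma$ equals $1$; this you get by comparing $\delta S$-components, using that $\Phi_2,\dots,\Phi_6$ all have vanishing first component and that $\delta S=C_3\bar{X}(t)$ is a nonzero element of $\Kset=\Cset(e^{-at},\bar{X}(t))$, hence fixed by $\sigma$. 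Both arguments hinge on exactly this last fact, so the mathematical content is the same; your version is shorter and cleaner for the statement as given. What the paper's longer computation buys is the explicit relations such as \eqref{eqn:dE}, $\sigma(\delta E(t))=\delta E(t)+\alpha_1 e^{-bt}$, which are cited later in the proof of Lemma~\ref{lem:4c}; but these also follow immediately from the lemma's conclusion by reading off the second component (only $\Phi_1$ and $\Phi_2$ have nonzero $\delta E$-entries), so nothing essential is lost in your approach.
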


\begin{proof}
Let $\Phi_1(t)=(\delta S(t),\delta E(t),\delta I(t),\delta X(t),\delta Y(t),\delta Z(t))^\T$.
Obviously, $\sigma(\delta S(t))=\delta S(t)$ for any $\sigma\in\G$.
From \eqref{eqn:ve} we have
\[
\frac{\d}{\d t}\sigma(\delta(E(t))=r\bar{I}(t)\sigma(\delta S(t))-b\sigma(\delta E(t))
=r\bar{I}(t)\delta S(t)-b\sigma(\delta E(t)),
\]
so that by \eqref{eqn:solve1}
\begin{equation}
\sigma(\delta E(t))=re^{-bt}\int_0^t\bar{I}(\tau)\delta S(\tau)e^{b\tau}\d\tau+\alpha_1e^{-bt}
 =\delta E(t)+\alpha_1e^{-bt},
\label{eqn:dE}
\end{equation}
where $\alpha_1\in\Cset$ is a constant.
Similarly, by \eqref{eqn:ve} and \eqref{eqn:dE}
\[
\frac{\d}{\d t}\sigma(\delta I(t))
 =b\sigma(\delta E(t))-a\sigma(\delta I(t))
 =b(\delta E(t)+\alpha_1e^{-bt})-a\sigma(\delta I(t))
\]
so that by \eqref{eqn:solve1}
\begin{align}
\sigma(\delta I(t))
=&be^{-at}\int_0^t\delta E(\tau)e^{a\tau}\d\tau
 +\alpha_1\gamma e^{-bt}+\tilde{\alpha}_2e^{-at}\notag\\
=&\delta I(t)+\alpha_1\gamma(e^{-bt}-1)+\tilde{\alpha}_2e^{-at},
\label{eqn:dI}
\end{align}
and by \eqref{eqn:ve} and \eqref{eqn:dI}
\begin{align*}
\frac{\d}{\d t}\sigma(\delta X(t))
=&-r\bar{X}(t)\sigma(\delta I(t))-r\bar{I}(t)\sigma(\delta X(t))\\
=&-r\bar{X}(t)(\delta I(t)+\alpha_1\gamma e^{-bt}+\tilde{\alpha}_2e^{-at})
 -\bar{I}(t)\sigma(\delta X(t)), 
\end{align*}
so that by \eqref{eqn:solve1}
\begin{align*}
\sigma(\delta X(t))
=& \bar{X}(t)\left(
 -r\int_0^t(\delta I(\tau)+\alpha_1\gamma e^{-bt}+\tilde{\alpha}_2e^{-at})\d\tau
 +\tilde{\alpha}_3\right)\\
=&\delta X(t)+\left(\frac{r\alpha_1\gamma}{b}(e^{-bt}-1)
 +\frac{r\tilde{\alpha}_2}{a}(e^{-at}-1)+\tilde{\alpha}_3\right)\bar{X}(t),
\end{align*}
where $\tilde{\alpha}_2,\tilde{\alpha}_3\in\Cset$ are constants..
Moreover, by \eqref{eqn:ve}, \eqref{eqn:solve1} and \eqref{eqn:dE} 
\begin{align*}
\sigma(\delta Y(t))=\delta Y(t)+\frac{r\alpha_1}{a}(e^{-bt}-1)+\alpha_4,\quad
\sigma(\delta Z(t))=\delta Z(t)+\alpha_5,
\end{align*}
where $\alpha_4,\alpha_5\in\Cset$ are constants.
Letting $\alpha_2=\tilde{\alpha}_2+\alpha_1\gamma$ and $\alpha_3=\tilde{\alpha}_3/C_3$,
 we obtain the desired result.
\end{proof}

Thus, we have
\[
\sigma(\Phi(t))=\Phi(t)
\begin{pmatrix}
1 &  0 & 0 & 0\\
\alpha_1 & c & 0 & 0 & 0 & 0\\
\alpha_2 & \gamma(c-1) & 1 & 0 & 0 & 0\\
\alpha_3 & \gamma_1(c-1)  & 0 & 1 & 0 & 0\\
\alpha_4 & \gamma_2(c-1)  & 0 & 0 & 1 & 0\\
\alpha_5 & 0  & 0 & 0 & 0 & 1\\
\end{pmatrix},
\]
where $\gamma_1=(\gamma+1)r/aC_3$, $\gamma_2=r/a$
 and $c$ is not a root of unity for some $\sigma\in\G$.

\begin{lem}
\label{lem:4c}
There exists $\sigma\in\G$ such that each of the following conditions holds$\,:$
\begin{enumerate}
\setlength{\leftskip}{-1.8em}
\item[\rm(i)]
$\alpha_1\neq 0;$
\item[\rm(ii)]
$\alpha_1=0$ and $c$ is not a root of unity.
\end{enumerate}
\end{lem}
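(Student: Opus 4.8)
The plan is to exploit the distinguished role of the second component $\delta E(t)$ of the first fundamental solution $\Phi_1(t)$ in \eqref{eqn:solve1}, which equals $rC_2C_3e^{-bt}\int_0^t e^{-(a-b)\tau}\bar{X}(\tau)\,d\tau$, together with the nonelementarity of the incomplete gamma function from Proposition~\ref{prop:R}. First I would introduce the intermediate field $\hat{\Kset}=\Kset(e^{-bt})=\Cset(e^{-at},e^{-bt},\bar{X}(t))$, observing that all entries of $\Phi_2(t),\ldots,\Phi_6(t)$ in \eqref{eqn:solve2a} and \eqref{eqn:solve3}--\eqref{eqn:solve6} lie in $\hat{\Kset}$, and that $\Lset\supset\hat{\Kset}$ is again a Picard-Vessiot extension of the VE \eqref{eqn:ve} whose field of constants is $\Cset$. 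The whole argument then reduces to showing $\delta E(t)\notin\hat{\Kset}$.

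Since $e^{-bt}\in\hat{\Kset}$ is nonzero, $\delta E(t)\in\hat{\Kset}$ is equivalent to the integral $\int_0^t e^{-(a-b)\tau}\bar{X}(\tau)\,d\tau$ being an element of $\hat{\Kset}$, hence an elementary function of $t$, because every element of $\hat{\Kset}$ is elementary. Performing the elementary change of variable $x=-\tfrac{rC_2}{a}e^{-at}$, which has the elementary inverse $t=-\tfrac{1}{a}\log(-\tfrac{a}{rC_2}x)$ and sends $\bar{X}(t)$ to $e^{-x}$, this integral becomes a nonzero constant multiple of an incomplete gamma function $\Gamma(1-b/a,x)$. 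Because $b/a\notin\Qset$ in this section, the order $\alpha=1-b/a\notin\Nset$, so Proposition~\ref{prop:R} shows $\Gamma(1-b/a,x)$ is not elementary. As an elementary change of variable carries fields of elementary functions to fields of elementary functions in both directions, $\delta E(t)$ cannot be elementary, and therefore $\delta E(t)\notin\hat{\Kset}$. I expect this transfer of nonelementarity through the substitution to be the main obstacle, since one must carefully justify that elementarity in $t$ and in $x$ are equivalent under the mutually elementary relation between $t$ and $x$.

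For part (i), I would apply Proposition~\ref{prop:G1}(i) to the Picard-Vessiot extension $\Lset\supset\hat{\Kset}$ with $\delta E(t)\in\Lset\setminus\hat{\Kset}$, obtaining $\sigma\in\mathrm{Gal}(\Lset/\hat{\Kset})\subset\G$ with $\sigma(\delta E(t))\neq\delta E(t)$. Such $\sigma$ fixes $\hat{\Kset}\ni e^{-bt}$, so $c=1$ in \eqref{eqn:4a}, and by \eqref{eqn:dE} we have $\sigma(\delta E(t))=\delta E(t)+\alpha_1e^{-bt}$; hence $\alpha_1\neq 0$, proving (i).

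For part (ii), I would combine an element realizing a $c$ that is not a root of unity with elements of the subgroup $H=\mathrm{Gal}(\Lset/\hat{\Kset})$. Every $\sigma\in H$ fixes $e^{-bt}$ and so has $c=1$; therefore the homomorphism rule $M_{\sigma\tau}=M_\sigma M_\tau$, restricted to the submatrix on rows and columns $1$ and $2$, shows that $\sigma\mapsto\alpha_1(\sigma)$ is an algebraic group homomorphism $H\to(\Cset,+)\cong\mathbb{G}_a$. By part (i) its image is nontrivial, and since the only algebraic subgroups of $\mathbb{G}_a$ in characteristic zero are $\{0\}$ and $\mathbb{G}_a$, this image is all of $\Cset$. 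Taking $\sigma_0\in\G$ with $c=c_0$ not a root of unity (which exists by Lemma~\ref{lem:4a} and Proposition~\ref{prop:G1}(i), as already noted before the lemma) and then $\tau\in H$ with $\alpha_1(\tau)=-\alpha_1(\sigma_0)/c_0$, the product $\sigma_0\tau$ has, on rows and columns $1$ and $2$, lower-left entry $\alpha_1(\sigma_0)+c_0\alpha_1(\tau)=0$ and diagonal entry $c_0$. Thus $\sigma_0\tau$ satisfies $\alpha_1=0$ with $c=c_0$ not a root of unity, which establishes (ii) and completes the proof.
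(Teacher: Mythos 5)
Your proposal is correct, and while part~(i) follows essentially the paper's line (nonelementarity of $\delta E(t)$ via Proposition~\ref{prop:R} plus Proposition~\ref{prop:G1}(i)), your part~(ii) takes a genuinely different route. The paper adjoins $\delta E(t)$, setting $\hat{\Kset}=\Kset(\delta E(t))$, notes that $\alpha_1=0$ on $\mathrm{Gal}(\Lset/\hat{\Kset})$ by \eqref{eqn:dE}, and then carries out a fairly long explicit computation in $\Kset[\delta E(t)]$ (parallel to Lemma~\ref{lem:4a}) to show $e^{-bt}\not\in\hat{\Kset}$, so that some element of that subgroup moves $e^{-bt}$. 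You instead adjoin $e^{-bt}$, work with $H=\mathrm{Gal}(\Lset/\Kset(e^{-bt}))$ on which $c\equiv 1$, observe that $\sigma\mapsto\alpha_1(\sigma)$ is then a morphism of algebraic groups $H\to\mathbb{G}_{\mathrm a}$ whose image is a closed subgroup, hence all of $\Cset$ once part~(i) shows it is nontrivial, and finally correct the $\alpha_1$-entry of an element $\sigma_0$ with $c_0$ not a root of unity by right-composing with a suitable $\tau\in H$; the block computation $\alpha_1(\sigma_0)+c_0\alpha_1(\tau)=0$ is right for the representation $\sigma(\Phi)=\Phi M_\sigma$, $M_{\sigma\tau}=M_\sigma M_\tau$. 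What your approach buys is the avoidance of the paper's field-theoretic computation, at the cost of invoking the standard algebraic-group facts (closedness of images, classification of subgroups of $\mathbb{G}_{\mathrm a}$ in characteristic zero); it also delivers ``$c$ not a root of unity'' directly from $\sigma_0$, whereas the paper's appeal to Proposition~\ref{prop:G1}(i) with $e^{-bt}\not\in\hat{\Kset}$ literally yields only $c\neq 1$ unless the computation is repeated for $e^{-kbt}$. The one step you should spell out, as you anticipate, is the transfer of (non)elementarity under the substitution $x=e^{-at}$, i.e., that an elementary tower over $\Cset(t)$ pulls back to an elementary tower over $\Cset(x)$ via $t=-\tfrac{1}{a}\log x$ and that $x^{b/a}$ is elementary over $\Cset(x,\log x)$; this is no worse than what the paper itself leaves implicit in its own proof of part~(i).
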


\begin{proof}
Let $\Phi_1(t)=(\delta S(t),\delta E(t),\delta I(t),\delta X(t),\delta Y(t),\delta Z(t))^\T$,
 as in the proof of Lemma~\ref{lem:4b}.
We begin with part~(i).
Substituting \eqref{eqn:solX1} into the second equation of \eqref{eqn:solve1}, we have
\begin{align}
\delta E(t)=&rC_2C_3e^{-bt}
 \int_1^x e^{-(a-b)\tau}\exp\left(\frac{rC_2}{a}e^{-at}\right)\d\tau\notag\\
 =& -\frac{rC_2C_3}{a}x^{b/a}\int_1^x y^{-b/a}\exp\left(\frac{rC_2}{a}y\right)\d y
\label{eqn:4b}
\end{align}
where $x=e^{-at}$.
Using Proposition~\ref{prop:R},
 we see that $\delta E(t)$ is not an elementary function of  $e^{-at}$,
 so that $\delta E(t)\not\in\Kset$, since $1-b/a\not\in\Nset$.
Hence,  for some $\sigma\in\G$
 we have $\sigma(\delta E(t))\neq\delta E(t)$ by Proposition~\ref{prop:G1}(i)
 and $\alpha_1\neq 0$ by \eqref{eqn:dE}.
Thus we obtain part~(i).

We turn to part~(ii).
Let $\hat{\Kset}=\Kset(\delta E(t))$.
Since $\hat{\Kset}\supset\Kset$ is a Picard-Vessiot extension
 for the linear system consisting of the first and second equations of \eqref{eqn:ve},
 we have $\mathrm{Gal}(\Lset/\hat{\Kset})\subset\G$ by Proposition~\ref{prop:G1}(ii).
By \eqref{eqn:dE} $\alpha_1=0$ for any $\sigma\in\mathrm{Gal}(\Lset/\hat{\Kset})$.
We claim that $e^{-bt}\not\in\hat{\Kset}$ to complete the proof via Proposition~\ref{prop:G1}(i).

We proceed as in the proof of Lemma~\ref{lem:4a} to show the claim.
Suppose that
\begin{equation}
e^{-bt}=\frac{g_1}{g_2}
\label{eqn:lem4c1}
\end{equation}
for some $g_1,g_2\in\Kset[\delta E(t)]$
 such that $g_1,g_2$ have no common factor and $g_2$ is monic.
Differentiating both sides of the above equation yields
\[
-be^{-bt}=\frac{\dot{g}_1g_2-g_1\dot{g}_2}{g_2^2}=-b\frac{g_1}{g_2},
\]
so that
\[
g_1\dot{g}_2=g_2(\dot{g}_1+bg_1).
\]
Hence, $\dot{g}_2$ is divisible by $g_2$
 since by \eqref{eqn:solI}, \eqref{eqn:ve} and \eqref{eqn:solve1}
\[
\delta\dot{E}(t)=rC_2C_3e^{-at}\bar{X}(t)-b\delta E(t),
\]
which means that $\dot{g}_1,\dot{g}_2\in\Kset[\delta E(t)]$.

Let
\[
g_2=\sum_{j=0}^\ell g_j\delta E(t)^j,
\]
where $\ell\in\Zset_{\ge 0}$ and $h_j\in\Kset$, $j=0,\ldots,\ell$, with $h_\ell=1$.
Then we compute
\begin{align*}
\dot{g}_2
=&\sum_{j=0}^\ell \dot{h}_j(\delta E(t)^j
 +jh_j\delta E(t)^{j-1}(rC_2C_3e^{-at}\bar{X}(t)-b\delta E(t)))\\
=&\sum_{j=0}^\ell(\dot{h}_j-jbh_j)\delta E(t)^j
 +rC_2C_3e^{-at}\bar{X}(t)\sum_{j=0}^{\ell-1}(j+1)h_{j+1}\delta E(t)^j.
\end{align*}
Since $\dot{g}_2$ is divisible by $g_2$, we have
\[
\dot{g}_2=-\ell bg_2,
\]
so that
\[
g_2=C_4 e^{-\ell bt},
\]
where $C_4\neq 0$ is a constant.
Hence, by \eqref{eqn:lem4c1}
\[
g_2=C_4\left(\frac{g_1}{g_2}\right)^\ell,
\] 
which means that $g_2=1$ since $g_1,g_2$ have no common factor.

Let
\begin{equation}
g_1=\sum_{j=0}^\ell h_j\delta E(t)^j,
\label{eqn:lem4c2}
\end{equation}
where $\ell\in\Zset_{\ge 0}$ and $h_j\in\Kset$, $j=0,\ldots,\ell$, with $h_\ell\neq 0$.
By \eqref{eqn:lem4c1} we have
\[
\dot{g}_1=-be^{-bt}=-bg_1.
\]
Substituting \eqref{eqn:lem4c2} into the above equation yields
\begin{align}
&\sum_{j=0}^\ell(\dot{h}_j-jbh_j)\delta E(t)^j
 +rC_2C_3e^{-at}\bar{X}(t)\sum_{j=0}^{\ell-1}(j+1)h_{j+1}\delta E(t)^j\notag\\
&=-b\sum_{j=0}^\ell h_j\delta E(t)^j.&
\label{eqn:lem4c3}
\end{align}
In particular,
\[
\dot{h}_\ell=(\ell-1)bh_\ell,
\]
so that
\[
h_\ell=C_5e^{(\ell-1)bt},
\]
where $C_5\neq 0$ is a constant.
If $\ell\neq 1$, then $h_\ell\not\in\Kset$ by Lemma~\ref{lem:4a}.
Hence, we have $\ell=1$ and by \eqref{eqn:lem4c3}
\[
\dot{h}_1=0,\quad
\dot{h}_0+bh_0=-rC_2C_3C_5e^{-at}\bar{X}(t)h_1,
\]
so that $h_1\neq 0$ is a constant and by \eqref{eqn:4b}
\[
h_0=-rC_2C_3C_5h_1e^{-bt}\left(\int_0^te^{-(a-b)\tau}\bar{X}(\tau)\d\tau+C_6\right)
=-C_5h_1\delta E(t)\not\in\Kset.
\]
So we have a contradiction and complete the proof.
\end{proof}

Let
\[
A=
\begin{pmatrix}
1 &  0 & 0 & 0 & 0 & 0\\
\alpha_1 & c & 0 & 0 & 0 & 0\\
\alpha_2 & \gamma(c-1) & 1 & 0 & 0 & 0\\
\alpha_3 & \gamma_1(c-1) & 0 & 1 & 0 & 0\\
\alpha_4 & \gamma_2(c-1) & 0 & 0 & 1 & 0\\
\alpha_5 & 0 & 0 & 0 & 0 & 1
\end{pmatrix}
\]
with $c\neq 1$.
We compute
\[
A^2=
\begin{pmatrix}
1 &0 & 0 & 0 & 0 & 0\\
\alpha_1(c+1) & c^2 & 0 & 0 & 0 & 0\\
\gamma\alpha_1(c-1)+2\alpha_2 & \gamma(c^2-1) & 1 & 0 & 0 & 0\\
\gamma_1\alpha_1(c-1)+2\alpha_3 & \gamma_1(c^2-1) & 0 &1 & 0 & 0\\
\gamma_2\alpha_1(c-1)+2\alpha_3 & \gamma_2(c^2-1) &  0 & 0 & 1 & 0\\
2\alpha_5 & 0 & 0 & 0 & 0 & 1
\end{pmatrix}
\]
and show by induction that for any $k\in\Nset$
\begin{align*}
A^k=&
\begin{pmatrix}
1 & 0 & 0 & 0 & 0 & 0\\[1ex]
\displaystyle
\frac{\alpha_1}{c-1}(c^k-1) &c^k & 0 & 0 & 0 & 0\\[2ex]
\displaystyle
\frac{\gamma\alpha_1}{c-1}(c^k-1)+k(\alpha_2-\gamma\alpha_1)
 & \gamma(c^k-1) & 1 & 0 & 0 & 0\\[2ex]
\displaystyle
\frac{\gamma_1\alpha_1}{c-1}(c^k-1)+k(\alpha_3-\gamma_1\alpha_1)
 & \gamma_1(c^k-1) & 0 & 1 & 0 & 0\\[2ex]
\displaystyle
\frac{\gamma_2\alpha_1}{c-1}(c^k-1)+k(\alpha_3-\gamma_2\alpha_1)
 & \gamma_2(c^k-1) & 0 & 0 & 1 & 0\\[2ex]
k\alpha_5 & 0 & 0 & 0 & 0 & 1
\end{pmatrix}.
\end{align*}
Hence, by Lemmas~\ref{lem:4b} and \ref{lem:4c}, we see that
\begin{align*}
%\G\supset
\G^0\supset
&\left\{\left.
\begin{pmatrix}
1 & 0 & 0 & 0 & 0 & 0\\
0 & c & 0 & 0 & 0 & 0\\
\ast & \gamma(c-1) & 1 & 0 & 0 & 0\\
\ast & \gamma_1(c-1) & 0 & 1 & 0 & 0\\
\ast & \gamma_2(c-1)& 0 & 0 & 1 & 0\\
\ast & 0 & 0 & 0 & 0 & 1\end{pmatrix}\,\right|\,c\in\Cset
\right\}\\
&\cup\left\{\left.
\begin{pmatrix}
1 & 0 & 0 & 0 & 0 & 0\\
\alpha_1(c-1) & c & 0 & 0 & 0 & 0\\
\ast & \gamma(c-1) & 1 & 0 & 0 & 0\\
\ast & \gamma_1(c-1) & 0 & 1 & 0 & 0\\
\ast & \gamma_2(c-1) & 0 & 0 & 1 & 0\\
\ast & 0 & 0 & 0 & 0 & 1
\end{pmatrix}\,\right|\,c\in\Cset
\right\},
\end{align*}
where $\alpha_1\neq 0$ is a constant
 and the asterisk represents certain constants which may be zero.
Thus, $\G^0$ is not commutative.
Applying Theorem~\ref{thm:MR} to the extended system \eqref{eqn:seixyz}
 and using Proposition~\ref{prop:3a},
 we obtain the desired result for $b/a\not\in\Qset$.
\qed

% **********************************************************
% Section 5
% **********************************************************

\section{Proof of Theorem~\ref{thm:main} for $a=b$}
Finally, we prove Theorem~\ref{thm:main} for $a=b$.
Let $\Phi(t)$ denote a fundamental matrix of \eqref{eqn:ve}
 of which the $j$th column vector $\Phi_j(t)$ is given
 by one of \eqref{eqn:solve1} and \eqref{eqn:solve2b}-\eqref{eqn:solve6}.
 as in Section~4.
We take $C_1=0$ in \eqref{eqn:solI2}.

Let $\Kset=\Cset(e^{at},\bar{X}(t))$ and let $\sigma\in\G$.
Obviously, by \eqref{eqn:solve3}-\eqref{eqn:solve6}, $\sigma(\Phi_j(t))=\Phi_j(t)$, $j\ge 3$.
Let $\Phi_2(t)=(\delta S(t),\delta E(t),\delta I(t),\delta X(t),\delta Y(t),\delta Z(t))^\T$.
Since
\[
\frac{\d}{\d t}\sigma(te^{-at})=-b\sigma(te^{-bt})+e^{-at},
\]
we have
\[
\sigma(te^{-at})=(t+c)e^{-at},
\]
where $c\in\Cset$.
From \eqref{eqn:solve2b} we have
\begin{align*}
&
\sigma(\delta I(t))=a((t+c)e^{-at})=\delta I(t)+ace^{-at},\\
&
\sigma(\delta X(t))=\frac{r}{a}((a(t+c)+1)e^{-at}-1)\bar{X}(t)=\delta X(t)+rce^{-at}\bar{X}(t)
\end{align*}
while $\sigma(\delta E(t))=\delta E(t)$ and $\sigma(\delta Y(t))=\delta Y(t)$.
Since $t\not\in\Kset$, we have $c\neq 0$ for some $\sigma\in\G$.
Thus, we see that
\[
\sigma(\Phi_2(t))=\Phi_2(t)+ac\Phi_3(t)+\frac{rc}{C_3}\Phi_4(t).
\]
We have the following for $\Phi_1(t)$ as in Lemma~\ref{lem:4b}.

\begin{lem}
\label{lem:5a}
Let $\sigma\in\G$.
We have
\[
\sigma(\Phi_1(t)) =\Phi_1(t)+\alpha_1\Phi_2(t)+\alpha_2\Phi_3(t)
 +\alpha_3\Phi_4(t)+\alpha_4\Phi_5(t)+\alpha_5\Phi_6(t)
\]
for some $\alpha_1,\alpha_2,\alpha_3,\alpha_4,\alpha_5\in\Cset$.
\end{lem}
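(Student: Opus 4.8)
Lemma~\ref{lem:5a} is the $a=b$ analogue of Lemma~\ref{lem:4b}, and the plan is to mirror that earlier proof exactly. The action of an arbitrary $\sigma\in\G$ on the first column $\Phi_1(t)$ must be expressed as $\Phi_1(t)$ plus a constant-coefficient combination of the remaining columns $\Phi_2(t),\dots,\Phi_6(t)$. As in Lemma~\ref{lem:4b}, the driving observation is that $\delta S(t)=C_3\bar{X}(t)\in\Kset$ (since $\bar X\in\Kset$ for the $a=b$ field $\Kset=\Cset(e^{at},\bar X(t))$), so $\sigma$ fixes the first component; the remaining components are then pinned down one at a time by differentiating and using that $\sigma$ commutes with $\d/\d t$.

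\textbf{The inductive unwinding.} I would write $\Phi_1(t)=(\delta S,\delta E,\delta I,\delta X,\delta Y,\delta Z)^\T$ and process the components in the order dictated by the triangular structure of the VE \eqref{eqn:ve}. First, $\sigma(\delta S)=\delta S$ because $\delta S=C_3\bar X(t)\in\Kset$. Next, since $\frac{\d}{\d t}\sigma(\delta E)=r\bar I(t)\delta S-a\sigma(\delta E)$ (using $b=a$), the difference $\sigma(\delta E)-\delta E$ solves the homogeneous equation $\dot w=-aw$, giving $\sigma(\delta E)=\delta E+\alpha_1 e^{-at}$ for some $\alpha_1\in\Cset$; here $e^{-at}=\delta E$ from \eqref{eqn:solve2b} is exactly $\Phi_2$'s $E$-component, so the $\alpha_1 e^{-at}$ term is the $\alpha_1\Phi_2$ contribution. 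I would then feed this into the $\delta I$ equation, $\frac{\d}{\d t}\sigma(\delta I)=b\sigma(\delta E)-a\sigma(\delta I)=a(\delta E+\alpha_1 e^{-at})-a\sigma(\delta I)$. The novelty of the $a=b$ case enters precisely here: the resonance forces a secular term, and solving the inhomogeneous linear ODE produces a $t e^{-at}$ contribution, so $\sigma(\delta I)=\delta I+a\alpha_1 t e^{-at}+\tilde\alpha_2 e^{-at}$ for a new constant $\tilde\alpha_2$. The $t e^{-at}$ piece is the $I$-component of $\Phi_2$ (see \eqref{eqn:solve2b}) and the $e^{-at}$ piece is that of $\Phi_3$, which is how $\alpha_1\Phi_2+\alpha_2\Phi_3$ emerges after collecting terms. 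Continuing down to $\delta X$, $\delta Y$, $\delta Z$ by integrating \eqref{eqn:ve} against the inhomogeneities just found (exactly as in Lemma~\ref{lem:4b}), each introduces one further constant $\tilde\alpha_3,\alpha_4,\alpha_5$, and the $\bar X(t)$ prefactors on the $X$-component reproduce the $\Phi_4$ direction after dividing by $C_3$.

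\textbf{Collecting constants.} The final step is bookkeeping: after all six components are computed, I would define $\alpha_2,\alpha_3$ as the appropriate linear combinations of the $\tilde\alpha_j$'s and the resonance-generated constants (for instance $\alpha_3=\tilde\alpha_3/C_3$, paralleling the end of Lemma~\ref{lem:4b}) so that the action collapses into the stated clean form with basis vectors $\Phi_2,\dots,\Phi_6$. Verifying that every $t$-dependent term on the right is accounted for by some $\Phi_j$-component, with no leftover transcendental pieces, is what certifies that the coefficients $\alpha_1,\dots,\alpha_5$ are genuinely \emph{constants} in $\Cset$ rather than elements of $\Kset$.

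\textbf{Main obstacle.} The only genuinely new difficulty relative to the $a\neq b$ computation is the confluence at $a=b$: the homogeneous solutions $e^{-at}$ and $e^{-bt}$ coincide, so integrating the $\delta I$ (and subsequently $\delta X$) equations yields secular $t e^{-at}$ terms instead of the clean $e^{-bt}-e^{-at}$ combinations of Section~4. I expect the bookkeeping to match these $t e^{-at}$ and $t^2 e^{-at}$-type terms against the correct components of $\Phi_2$ and $\Phi_3$ from \eqref{eqn:solve2b} to be the step requiring the most care; everything else is a routine, triangular solve of inhomogeneous first-order linear ODEs whose homogeneous part has coefficients already lying in $\Kset$.
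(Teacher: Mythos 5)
Your proposal is correct and follows essentially the same route as the paper: fix $\delta S\in\Kset$, then peel off the components in the triangular order of the VE, solving each inhomogeneous first-order ODE for the difference $\sigma(\delta\,\cdot)-\delta\,\cdot$ and matching the resulting homogeneous solutions (including the secular $te^{-at}$ terms forced by the resonance $a=b$) against the corresponding components of $\Phi_2,\dots,\Phi_6$ from \eqref{eqn:solve2b}--\eqref{eqn:solve6}, with the final rescaling $\alpha_3=\tilde{\alpha}_3/C_3$. No gaps.
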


\begin{proof}
Let $\Phi_2(t)=(\delta S(t),\delta E(t),\delta I(t),\delta X(t),\delta Y(t),\delta Z(t))^\T$.
Obviously, $\sigma(\delta S(t))=\delta S(t)$ for any $\sigma\in\G$.
From \eqref{eqn:ve} and \eqref{eqn:solve1}
 we obtain \eqref{eqn:dE} as in the proof of Lemma~\ref{lem:4b}.
Moreover, from \eqref{eqn:ve} and \eqref{eqn:dE} we have
\[
\frac{\d}{\d t}\sigma(\delta I(t))
% =a\sigma(\delta E(t))-a\sigma(\delta I(t))
 =a(\delta E(t)+\alpha_1e^{-at})-a\sigma(\delta I(t)),
\]
so that by \eqref{eqn:solve1}
\begin{align}
\sigma(\delta I(t))
=&ae^{-at}\int_0^t(\delta E(\tau)e^{a\tau}+\alpha_1)\d\tau+\alpha_2e^{-at}\notag\\
=&\delta I(t)+a\alpha_1 te^{-at}+\alpha_2e^{-at},
\label{eqn:dI2}
\end{align}
where $\alpha_2\in\Cset$ is a constant.
Similarly, by \eqref{eqn:ve} and \eqref{eqn:dI2}
\[
\frac{\d}{\d t}\sigma(\delta X(t))
 =-r\bar{X}(t)(\delta I(t)+a\alpha_1te^{-at}+\alpha_2e^{-at})-\bar{I}(t)\sigma(\delta X(t)),
\]
so that by \eqref{eqn:solve1}
\begin{align*}
\sigma(\delta X(t))
=&\bar{X}(t)\left(-r\int_0^t(\delta I(\tau)+\alpha_1te^{-a\tau}+\alpha_2e^{-a\tau})\d\tau
 +\tilde{\alpha}_3\right)\\
=&\delta X(t)+\left(\frac{r\alpha_1}{a}((at+1)e^{-at}-1)
 +\frac{r\alpha_2}{a}(e^{-at}-1)+\tilde{\alpha}_3\right)\bar{X}(t),
\end{align*}
where $\tilde{\alpha}_3\in\Cset$ is a constant.
In addition, by \eqref{eqn:ve}, \eqref{eqn:solve1} and \eqref{eqn:dE}
\[
\sigma(\delta Y(t))=\delta Y(t)+\frac{r\alpha_1}{a}(e^{-at}-1)+\alpha_4,\quad
\sigma(\delta Z(t))=\delta Z(t)+\alpha_5,
\]
where $\alpha_4,\alpha_5\in\Cset$ are constants.
Letting $\alpha_3=\tilde{\alpha}_3/C_3$,
 we obtain the desired result.
\end{proof}

Thus, we have
\[
\sigma(\Phi(t))=\Phi(t)
\begin{pmatrix}
1 &  0 & 0 & 0 & 0 & 0\\
\alpha_1 & 1 & 0 & 0 & 0 & 0\\
\alpha_2 & c & 1 & 0 & 0 & 0\\
\alpha_3 & \gamma_1c & 0 & 1 & 0 & 0\\
\alpha_4 & 0 & 0 & 0 & 1 & 0\\
\alpha_5 & 0 & 0 & 0 & 0 & 1
\end{pmatrix},
\]
where $ac$ is replaced by $c$ and $\gamma_1=r/aC_3$.

\begin{lem}
\label{lem:5b}
There exists $\sigma\in\G$ such that each of the following conditions holds$\,:$
\begin{enumerate}
\setlength{\leftskip}{-1.8em}
\item[\rm(i)]
$\alpha_1=0;$
\item[\rm(ii)]
$\alpha_1=0$ and $c\neq 0$.
\end{enumerate}
\end{lem}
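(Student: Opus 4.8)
The plan is to realize both automorphisms inside the subgroup that fixes $\delta E(t)$, where $\delta E(t)$ denotes the second component of $\Phi_1(t)$ in \eqref{eqn:solve1}, i.e.\ $\delta E(t)=rC_2C_3e^{-at}\int_0^t\bar X(\tau)\,\d\tau$ when $a=b$. Set $\hat\Kset=\Kset(\delta E(t))$. Since $\hat\Kset\supset\Kset$ is a Picard--Vessiot extension for the subsystem consisting of the first two equations of \eqref{eqn:ve}, Proposition~\ref{prop:G1}(ii) gives $\mathrm{Gal}(\Lset/\hat\Kset)\subset\G$, and by \eqref{eqn:dE} every $\sigma\in\mathrm{Gal}(\Lset/\hat\Kset)$ fixes $\delta E(t)$ and therefore has $\alpha_1=0$. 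This subgroup is nonempty (it contains the identity), so part~(i) follows at once; it remains to exhibit inside it an element with $c\neq0$ for part~(ii).

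By the construction of $c$ preceding Lemma~\ref{lem:5a}, the condition $c\neq0$ means exactly that $\sigma$ moves $te^{-at}$. As $e^{-at}=1/e^{at}\in\Kset$, we have $te^{-at}\in\hat\Kset\iff t\in\hat\Kset$, so by Proposition~\ref{prop:G1}(i) applied to $\Lset\supset\hat\Kset$ it suffices to prove $t\notin\hat\Kset$: then some $\sigma\in\mathrm{Gal}(\Lset/\hat\Kset)$ satisfies $\sigma(t)\neq t$, yielding $c\neq0$ together with the automatic $\alpha_1=0$.

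To prove $t\notin\hat\Kset$ I would first rewrite the extension as a primitive one. Put $\eta=e^{at}\delta E(t)$; using \eqref{eqn:solX1} and the relation $\delta E'=-a\delta E+rC_2C_3e^{-at}\bar X(t)$ one gets $\eta'=rC_2C_3\bar X(t)\in\Kset$, so $\hat\Kset=\Kset(\eta)$ with $\eta'\in\Kset$. That $\eta$ is transcendental over $\Kset$ follows from Proposition~\ref{prop:R}: a primitive of $\bar X(t)=\exp(\tfrac{rC_2}{a}e^{-at})$ is, after the substitution $s=e^{-at}$, an incomplete gamma function with $\alpha=0\notin\Nset$, hence not elementary and not in $\Kset$. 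For a transcendental primitive extension one has the standard fact, provable exactly by the divisibility argument of Lemma~\ref{lem:4a}, that any $g\in\Kset(\eta)$ with $g'\in\Kset$ has the form $g=c_0\eta+h$ with $c_0\in\Cset$ and $h\in\Kset$. Applying this to $g=t$, for which $t'=1\in\Kset$, gives $t=c_0\eta+h$ and, upon differentiating,
\[
1=c_0rC_2C_3\bar X(t)+h',\qquad h\in\Kset=\Cset(e^{at})(\bar X(t)).
\]

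The crux is to rule this out. Writing $h=\sum_j\phi_j\bar X(t)^j$ with $\phi_j\in\Cset(e^{at})$ and matching powers of $\bar X(t)$ (using $\bar X(t)'/\bar X(t)=-rC_2e^{-at}$), the coefficients of $\bar X(t)^j$ with $j\neq0,1$ force $\phi_j=0$, while the coefficient of $\bar X(t)^0$ demands $\phi_0'=1$ for some $\phi_0\in\Cset(e^{at})$, which is impossible since $t\notin\Cset(e^{at})$. If moreover $c_0\neq0$, the coefficient of $\bar X(t)^1$ requires a rational $\phi_1\in\Cset(e^{at})$ solving the Risch-type equation $\phi_1'-rC_2e^{-at}\phi_1=-c_0rC_2C_3$; setting $u=e^{at}$ converts this into $au\,\d\phi_1/\d u-(rC_2/u)\phi_1=-c_0rC_2C_3$, whose only formal Laurent solution fails to terminate, so no rational $\phi_1$ exists. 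This last step is again the incomplete gamma obstruction furnished by Proposition~\ref{prop:R}, and it is the main difficulty of the argument. Both cases being impossible, $t\notin\hat\Kset$, which establishes part~(ii) and, a fortiori, part~(i).
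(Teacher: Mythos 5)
Your part~(ii) follows the paper's route --- pass to $\hat{\Kset}=\Kset(\delta E(t))$, use Proposition~\ref{prop:G1}(ii) to get $\mathrm{Gal}(\Lset/\hat{\Kset})\subset\G$ with $\alpha_1=0$ forced by \eqref{eqn:dE}, and extract $c\neq 0$ from $t\not\in\hat{\Kset}$ via Proposition~\ref{prop:G1}(i) --- and in fact goes further than the paper, which simply asserts $t\not\in\hat{\Kset}$ without proof, whereas you derive it from the Ostrowski-type decomposition $g=c_0\eta+h$ for the primitive $\eta=e^{at}\delta E(t)$. Two remarks on that extra work. First, your ``only formal Laurent solution fails to terminate'' step is loosely stated: rational functions of $u=e^{at}$ have non-terminating Laurent expansions in general, so you must first exclude poles of $\phi_1$ at points $u_0\neq 0,\infty$ (an easy order count: the pole of $au\,\d\phi_1/\d u$ has order one higher than that of $(rC_2/u)\phi_1$), after which $\phi_1$ is a Laurent polynomial and the recursion $rC_2 b_{j+1}=ajb_j$ for $j\ge 1$ gives the contradiction. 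Second, the whole $\bar{X}(t)^1$-coefficient analysis can be short-circuited: a rational solution $\phi_1$ would give $\frac{\d}{\d t}\bigl(\phi_1\bar{X}(t)\bigr)=-c_0rC_2C_3\bar{X}(t)=-c_0\dot{\eta}$, hence $\eta\in\Kset$ up to an additive constant, directly contradicting the nonelementarity you already obtained from Proposition~\ref{prop:R}.

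The genuine gap is part~(i). As printed, ``$\alpha_1=0$'' in (i) is a typo for ``$\alpha_1\neq 0$'': compare Lemma~\ref{lem:4c}(i), and note that the noncommutativity argument following the lemma requires a group element with $\alpha_1\neq 0$ (the second displayed family in Section~5, with entry $\alpha_1c$ and the closing phrase ``where $\alpha_1\neq 0$ is a constant''); under the literal reading, (i) is subsumed by (ii) and the lemma could not yield that family, so $\G^0$ noncommutative would not follow. Your proof of (i) --- the identity element of $\mathrm{Gal}(\Lset/\hat{\Kset})$ --- establishes only this vacuous reading. The paper instead proves (i) as in Lemma~\ref{lem:4c}(i): since $\delta E(t)=rC_2C_3e^{-at}\int_0^t\exp\bigl(\tfrac{rC_2}{a}e^{-a\tau}\bigr)\d\tau$ is, by Proposition~\ref{prop:R} with $\alpha=0$, not elementary, it does not lie in $\Kset$, so Proposition~\ref{prop:G1}(i) supplies $\sigma\in\G$ with $\sigma(\delta E(t))\neq\delta E(t)$, and \eqref{eqn:dE} then forces $\alpha_1\neq 0$. (The paper's own proof of (ii) likewise carries a typo, ``$\alpha_1=1$'' for ``$\alpha_1=0$''.) Fortunately you proved the needed fact in passing: $\eta\not\in\Kset$ is equivalent to $\delta E(t)\not\in\Kset$ because $e^{at}\in\Kset$, so the repair is one line --- but as written your proposal does not deliver the statement the rest of Section~5 actually uses.
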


\begin{proof}
Let $\Phi_1(t)=(\delta S(t),\delta E(t),\delta I(t),\delta X(t),\delta Y(t),\delta Z(t))^\T$,
 as in the proof of Lemma~\ref{lem:5a}.
We can prove part~(i) as in Lemma~\ref{lem:4c}.
In particular,
\[
\delta E(t)=rC_2C_3e^{-at}\int_0^t \exp\left(\frac{rC_2}{a}e^{-at}\right)\d\tau.
\]
We turn to part~(ii).
Let $\hat{\Kset}=\Kset(\delta E(t))$.
We have $\mathrm{Gal}(\Lset/\hat{\Kset})\subset\G$,
 as in the proof of Lemma~\ref{lem:4c}.
By \eqref{eqn:dE} $\alpha_1=1$
 for any $\sigma\in\mathrm{Gal}(\Lset/\hat{\Kset})\subset\G$.
On the other hand, we have $c\neq 0$ for some $\sigma\in\mathrm{Gal}(\Lset/\hat{\Kset})$
 since $t\not\in\hat{\Kset}$.
This completes the proof.
\end{proof}

Let
\[
A=
\begin{pmatrix}
1 &  0 & 0 & 0 & 0 & 0\\
\alpha_1 & 1 & 0 & 0 & 0 & 0\\
\alpha_2 & c & 1 & 0 & 0 & 0\\
\alpha_3 & \gamma_1c & 0 & 1 & 0 & 0\\
\alpha_4 & 0 & 0 & 0 & 1 & 0\\
\alpha_5 & 0 & 0 & 0 & 0 & 1
\end{pmatrix}
\]
with $c\neq 0$.
We compute
\[
A^2=
\begin{pmatrix}
1 &  0 & 0 & 0 & 0 & 0\\
2\alpha_1 & 1 & 0 & 0 & 0 & 0\\
\alpha_1c+2\alpha_2 & 2c & 1 & 0 & 0 & 0\\
\gamma_1\alpha_1+2\alpha_3 & 2\gamma_1c & 0 & 1 & 0 & 0\\
2\alpha_4 & 0 & 0 & 0 & 1 & 0\\
2\alpha_5 & 0 & 0 & 0 & 0 & 1
\end{pmatrix}
\]
and show by induction that for any $k\in\Nset$
\begin{align*}
A^k=&
\begin{pmatrix}
1 & 0 & 0 & 0 & 0 & 0\\
k\alpha_1 & 1 & 0 & 0 & 0 & 0\\
\tfrac{1}{2}k(k-1)\alpha_1c+k\alpha_2 & kc & 0 &1 & 0 & 0\\[1ex]
\tfrac{1}{2}k(k-1)\alpha_1\gamma_1c+k\alpha_2 & k\gamma_1c & 0 &1 & 0 & 0\\[1ex]
k\alpha_4 & 0 & 0 & 0 & 1 & 0\\
k\alpha_5 & 0 & 0 & 0 & 0 & 1
\end{pmatrix}.
\end{align*}
Hence, by Lemmas~\ref{lem:5a} and \ref{lem:5b}, we see that
\begin{align*}
%\G\supset
\G^0\supset
&\left\{\left.
\begin{pmatrix}
1 & 0 & 0 & 0 & 0 & 0\\
0 & 1 & 0 & 0 & 0 & 0\\
\ast & c & 1 & 0 & 0 & 0\\
\ast & \gamma_1c & 0 & 1 & 0 & 0\\
\ast & 0 & 0 & 0 & 1 & 0\\
\ast & 0 & 0 & 0 & 0 & 1\end{pmatrix}\,\right|\,c\in\Cset
\right\}\\
&\cup\left\{\left.
\begin{pmatrix}
1 & 0 & 0 & 0 & 0 & 0\\
\alpha_1c & 1 & 0 & 0 & 0 & 0\\
\ast & c & 1 & 0 & 0 & 0\\
\ast & \gamma_1c & 0 & 1 & 0 & 0\\
\ast & 0 & 0 & 0 & 1 & 0\\
\ast & 0 & 0 & 0 & 0 & 1
\end{pmatrix}\,\right|\,c\in\Cset
\right\},
\end{align*}
where $\alpha_1\neq 0$ is a constant
 and the asterisk represents certain constants which may be zero.
Thus, $\G^0$ is not commutative.
Using Theorem~\ref{thm:MR} and Proposition~\ref{prop:3a},
 we obtain the desired result for $b=a$.
\qed

\section*{Acknowledgments}
The author thanks Juan J. Morales-Ruiz  for useful comments
 and Shoya Motonaga for helpful discussions.
This work was partially supported by the JSPS KAKENHI Grant Number JP17H02859.

% **********************************************************
% Appendices
% **********************************************************

\appendix

\renewcommand{\theequation}{\Alph{section}.\arabic{equation}}

 % **********************************************************
% Bibliography
% **********************************************************


\begin{thebibliography}{99}

\bibitem{ALMP18}
P.B.~Acosta-Hum\'anez, J.T.~L\'azaro, J.J.~Morales-Ruiz and C.~Pantazi,
Differential Galois Theory and non-integrability of planar polynomial vector fields,
\textit{J. Differential Equations}, \textbf{264} (2018), 7183--7212.
\bibitem{A89}
V.I.~Arnold,
\textit{Mathematical Methods of Classical Mechanics}, 2nd ed.,
Springer, New York, 1989.

\bibitem{AZ10}
M.~Ayoul and N.T.~Zung,
Galoisian obstructions to non-Hamiltonian integrability,
\textit{C. R. Math. Acad. Sci. Paris}, \textbf{348} (2010), 1323--1326.

\bibitem{BC19}
F.~Brauer and C.~Castillo-Chavez,
\textit{Mathematical Models in Epidemiology},
Springer, New York, 2019.

\bibitem{B05}
M.~Bronstein,
\textit{Symbolic Integration I}, 2nd ed.,
Springer, Berlin, 2005.

\bibitem{B98}
O.I.~Bogoyavlenskij, 
Extended integrability and bi-hamiltonian systems, 
\textit{Comm. Math. Phys.}, \textbf{196} (1998), 19--51.

\bibitem{CH11}
T.~Crespo and Z.~Hajto,
\textit{Algebraic Groups and Differential Galois Theory},
American Mathematical Society, Providence, RI, 2011.

\bibitem{GCL92}
K.O.~Geddes, S.R.~Czapor and G.~Labahn,
\textit{Algorithms for Computer Algebra},
Kluwer Academic Publisher, Boston, MA, 1992.

%\bibitem{GR15}
%I.S.~Gradshteyn and I.M.~Ryzhik,
%\textit{Table of Integrals, Series, and Products}, 8th ed.,
%Elsevier/Academic Press, Amsterdam, 2015.

%\bibitem{GH83}
%J.~Guckenheimer and P.J.~Holmes,
%\textit{Nonlinear Oscillations, Dynamical Systems, and Bifurcations of Vector Fields},
%Springer, New York, 1983.

\bibitem{KM27}
W.O.~Kermack and A.G.~McKendrick,
A contribution to the mathematical theory of epidemics,
\textit{Proc. R. Soc. Lond. A}, \textbf{115} (1927), 700--21.

\bibitem{M15}
M.~Martcheva,
\textit{An Introduction to Mathematical Epidemiology},
Springer, New York, 2015.

\bibitem{M99}
J.J.~Morales-Ruiz,
\textit{Differential Galois Theory and Non-Integrability of Hamiltonian Systems},
Birkh\"auser, Basel, 1999.
\bibitem{MR01}
J.J.~Morales-Ruiz and J.-P.~Ramis,
Galoisian obstructions to integrability of Hamiltonian systems,
\textit{Methods, Appl. Anal.}, \textbf{8} (2001), 33--96.

\bibitem{PS03}
M.~van der Put and M.F.~Singer,
\textit{Galois Theory of Linear Differential Equations},
Springer, Berlin, 2003.

\bibitem{R69}
R.~Risch,
The problem of integration in finite terms,
\textit{Trans. Amer. Math. Soc.}, \textbf{139} (1969), 167--189. 

\bibitem{R72}
M.~Rosenlicht,
Integration in finite terms,
\textit{Amer. Math. Monthly}, \textbf{79} (1972), 963--972.

%\bibitem{R76}
%M.~Rothstein,
%\textit{Aspects of Symbolic Integration
% and Simplification of Exponential and Primitive Functions},
%PhD thesis, University of Wisconsin, Madison, WI, 1976.

\bibitem{ST11}
H.L.~Smith and H.R.~Thieme,
\textit{Dynamical Systems and Population Persistence},
American Mathematical Society, Providence, RI, 2011.

%\bibitem{T84}
%B.M.~Trager,
%\textit{On the Integration of Algebraic Functions},
%PhD thesis, MIT,  Cambridge, MA, 1984.

\bibitem{Y22}
K.~Yagasaki, 
Optimal control of the SIR epidemic model based on 
dynamical systems theory,
\textit{Discrete Contin. Dyn. Syst. B}, to appear.

\end{thebibliography}
\end{document}